\documentclass[12pt, a4paper]{amsart}



\usepackage{ amssymb, amsmath, enumerate, amsfonts, amsthm, mathrsfs, url, bm}




\numberwithin{equation}{section}

\setlength{\parindent}{1em}

\usepackage{xcolor}  	
\usepackage[backref]{hyperref}
\hypersetup{
	colorlinks,
    linkcolor={blue!60!black},
    citecolor={blue!60!black},
    urlcolor={red!60!black}
}

\usepackage{color}

\usepackage[margin=1.25in]{geometry}

\usepackage[square,sort,comma,numbers]{natbib}
\setlength{\bibsep}{0.0pt}


\newcommand{\C}{\mathbb{C}}

\newcommand{\N}{\mathbb{N}} 
\newcommand{\Z}{\mathbb{Z}}

\newcommand{\R}{\mathbb{R}}

\newcommand{\T}{\mathbb{T}}

\newcommand{\ve}{\varepsilon}

\newcommand{\f}[2]{\frac{#1}{#2}}


\newcommand{\set}[1]{\mathopen{}\left\{#1\mathclose{}\right\}}

\newcommand{\biggset}[1]{\biggl\{ #1 \biggr\}}

\newcommand{\abs}[1]{\mathopen{}\left| #1\mathclose{}\right|}
\newcommand{\bigabs}[1]{\bigl| #1 \bigr|}

\newcommand{\biggabs}[1]{\biggl| #1 \biggr|}

\newcommand{\sqbrac}[1]{\mathopen{}\left[ #1 \mathclose{}\right]}

\newcommand{\ceil}[1]{\mathopen{}\left\lceil #1 \mathclose{}\right\rceil}

\newcommand{\floor}[1]{\mathopen{}\left\lfloor #1 \right\rfloor}
\newcommand{\brac}[1]{\mathopen{}\left( #1 \mathclose{}\right)}
\newcommand{\bigbrac}[1]{\bigl( #1 \bigr)}
\newcommand{\Bigbrac}[1]{\Bigl( #1 \Bigr)}
\newcommand{\biggbrac}[1]{\biggl( #1 \biggr)}

\newcommand{\norm}[1]{\mathopen{}\left\| #1\mathclose{}\right\|}
\newcommand{\bignorm}[1]{\big\| #1 \big\|}

\newcommand{\ang}[1]{\mathopen{}\left\langle#1\mathclose{}\right\rangle}

\newcommand{\recip}[1]{\frac{1}{#1}}

\newcommand{\vh}{\underline{h}}

\newcommand{\E}{\mathbb{E}}

\newcommand{\intd}{\mathrm{d}}
\newcommand{\supp}{\mathrm{supp}}

\newcommand{\eps}{\varepsilon}
\newcommand{\hash}{\#}

\makeatletter
\let\@@pmod\pmod
\DeclareRobustCommand{\pmod}{\@ifstar\@pmods\@@pmod}
\def\@pmods#1{\mkern4mu({\operator@font mod}\mkern 6mu#1)}
\makeatother


\newtheorem{theorem}{Theorem}[section]

\newtheorem{corollary}[theorem]{Corollary}
\newtheorem{claim}[theorem]{Claim}

\newtheorem{lemma}[theorem]{Lemma}

\theoremstyle{definition}

\newtheorem*{remark}{Remark}

\numberwithin{theorem}{section}

\title{Quantitative bounds in the nonlinear Roth theorem}

\author{Sarah Peluse}
\thanks{During the completion of this work, the first author was partially supported by the NSF Graduate Research Fellowship Program under Grant No.\ DGE-114747 and by the Stanford University Mayfield Graduate Fellowship.}
\address{School of Mathematics, Institute for Advanced Study, Princeton, 
USA}
\address{and}
\address{Department of Mathematics, Princeton University, Princeton, 
USA}
\email{speluse@princeton.edu}

\author{Sean Prendiville}
\address{Department of Mathematics and Statistics, Lancaster University, UK}
\email{s.prendiville@lancaster.ac.uk}

\begin{document}

\begin{abstract}
We show that there exists $c>0$ such that any subset of $\{1, \dots, N\}$ of density at least $(\log\log{N})^{-c}$ contains a nontrivial progression of the form $x,x+y,x+y^2$. This is the first quantitatively effective version of the Bergelson--Leibman polynomial Szemer\'edi theorem for a progression involving polynomials of differing degrees. Our key innovation is an inverse theorem characterising sets for which the number of configurations $x,x+y,x+y^2$ deviates substantially from the expected value. In proving this, we develop the first effective instance of a concatenation theorem of Tao and Ziegler, with polynomial bounds.
\end{abstract}

\maketitle

\setcounter{tocdepth}{1}
\tableofcontents

\maketitle

\section{Introduction}\label{introduction}

Gowers~\cite[Problem 11.4]{GowersArithmetic} has posed the problem of obtaining quantitative bounds in the polynomial Szemer\'edi theorem of Bergelson and Leibman~\cite{BergelsonLeibmanPolynomial}. This states that if $P_1,\dots,P_m\in\Z[y]$ all have zero constant term, then any subset of $\set{1,2, \dots, N}$ lacking the polynomial progression
\begin{equation}\label{eq1.1}
x,\ x+P_1(y),\ \dots\ ,\ x+P_m(y) \qquad (y \in \Z\setminus\set{0})
\end{equation}
has size $o(N)$. Hitherto, all effective versions of this result have been restricted to two-term progressions \cite{SarkozyDifferenceI,SarkozyDifferenceIII, PSSSets, BPPSDifference, SlijepcevicPolynomial, LucierIntersective, RiceMaximal}, arithmetic progressions with common difference equal to a perfect power \cite{GowersNewFour,GowersNew,PrendivilleQuantitative}, or are concerned with the analogous question over finite fields \cite{BourgainChangNonlinear, PeluseThree, DLSImproved, PelusePolynomial}. In this paper, we obtain the first  bound over the integers for a progression of length greater than two and involving polynomials of differing degrees.

\begin{theorem}\label{main}
There exists $c>0$ such that if $A \subset \set{1,2 ,\dots, N}$ contains no progression of the form\footnote{We call this the \emph{nonlinear Roth} configuration, after Bourgain and Chang \cite{BourgainChangNonlinear}.}
\begin{equation}\label{main config}
x,\ x+y, \ x+y^2 \qquad (y \neq 0),
\end{equation}
then\footnote{See \S\ref{sec:notation} for our conventions regarding asymptotic notation such as `$\ll$'.}
\begin{equation}\label{eq:density}
|A|\ll N(\log\log{N})^{-c}.
\end{equation}
\end{theorem}
\begin{remark}\label{rmk1.2}
Keeping track of exponents in our proof, $c = 1/2^{150}$ is admissible.
\end{remark}

%


Our proof of Theorem~\ref{main} adapts a strategy of the first author \cite{PelusePolynomial} from finite fields to the integer setting. There are multiple issues with applying these ideas  in the integers, so the proof of Theorem~\ref{main} requires several significant modifications and additions. The key insight of~\cite{PelusePolynomial} is that if one can control the count of an affine independent polynomial progression  by the Gowers $U^s$-norm, then one can use this and an understanding of shorter progressions to prove control of the count by the $U^{s-1}$-norm. Thus, if one understands shorter progressions and can show control by any $U^s$-norm, then one can deduce control by the $U^1$-seminorm. As the $U^1$-seminorm measures correlation with constant functions, this is very powerful information. 

Over the integers there are certain `local' issues which preclude effective control by the global $U^1$-seminorm. Instead, we control  our counting operator by an average of $U^1$-seminorms, each localised to a progression of length $ N^{1/2}$ and small common difference.

\begin{theorem}[Inverse theorem for nonlinear Roth]\label{thm:inverse-zero}
Let $f : \Z \to \C$ be a 1-bounded function supported  in the interval $[N]:=\set{1,\dots, N}$ and let $\delta >0$. Suppose that
$$
\abs{\sum_{x\in \Z}\, \sum_{y \in [N^{1/2}]} f(x)f(x+y)f(x+y^2)} \geq \delta N^{\frac{3}{2}}.
$$
Then either $N\ll \delta^{-O(1)}$ or there exist positive integers $q\ll\delta^{-O(1)}$ and $ N'\gg\delta^{O(1)}N^{1/2}$ such that
\begin{equation}\label{0eq3.4}
\sum_{x\in \Z}\left|\sum_{y\in[N']}f(x+qy)\right|\gg\delta^{O(1)}NN'.
\end{equation}
\end{theorem}

To derive our density bound (Theorem~\ref{main}), we use our inverse theorem (Theorem \ref{thm:inverse-zero}) to prove that sets lacking the nonlinear Roth configuration have a density increment on a progression with  small common difference. In a sequel \cite{PelusePrendivillePolylogarithmic} we further bootstrap our inverse theorem (Theorem \ref{thm:inverse-zero}) to obtain a  larger density increment, and thereby replace the double logarithm in \eqref{eq:density} with a single logarithm.  This bootstrapping procedure follows the (now standard) energy increment procedure of Heath--Brown and Szemer\'edi \cite{HeathBrownInteger, SzemerediInteger}, and to avoid obfuscating our argument with further technicalities, we delegate this improvement to a subsequent paper.

A variety of perspectives, both ergodic and combinatorial, can be used to establish that the count of nonlinear Roth configurations is controlled by a $U^1$-seminorm, in a qualitative sense. The novelty of Theorem \ref{thm:inverse-zero} is that this is demonstrated in a quantitatively effective manner, with polynomial bounds, by avoiding standard tools of higher order Fourier analysis (which give poorer bounds). Indeed, whilst Gowers norms of high degree such as the $U^5$-norm play a role in our argument, we completely avoid using the inverse theorem for these norms, the equidistribution theory of nilsequences, or any version of the arithmetic regularity lemma, requiring only Fourier analysis and numerous applications of the Cauchy--Schwarz inequality.

Perhaps the biggest difficulty in adapting the argument of~\cite{PelusePolynomial} to the integer setting is in first showing that the count of nonlinear Roth configurations is controlled by some global $U^s$-norm. While this is not too difficult to accomplish in finite fields using Bergelson and Leibman's PET induction scheme~\cite{BergelsonLeibmanPolynomial}, in the integers such an argument yields control in terms of an average of certain constrained Gowers norms. We must then show, with quantitative bounds, that this average of constrained Gowers norms is controlled by a genuine global $U^s$-norm. Such `concatenation' results have been proved by Tao and Ziegler~\cite{TaoZieglerConcatenation}; however the quantitative dependence in their argument is (at best) tower-type.   We offer a different proof of an instance of their concatenation theorem, one which yields  polynomial bounds.

This paper is organised as follows. In \S\ref{sec3}, we give a more detailed outline of the proof of Theorem~\ref{main}. In \S\ref{increment sec} we derive Theorem \ref{main} from a density increment lemma, whose proof is deduced from a generalisation of our inverse theorem (Theorem \ref{thm:inverse-zero}) in \S\ref{increment proof sec}. This generalised inverse theorem (Theorem \ref{thm:inverse-one}) is proved in \S\ref{inverse theorem section}.  In \S\ref{pet} we show that our counting operator is controlled by an average of constrained Gowers norms, and in \S\ref{concat sec} we show that these constrained averages are controlled by a single global Gowers norm of higher degree. This `concatenation' argument uses an arithmetic variant of the box norm inverse theorem, which we state and prove in \S\ref{arithmetic inverse}. Finally in \S\ref{sec6} we describe our degree lowering procedure, showing how global control of our configuration by the $U^s$-norm, implies global control by the $U^{s-1}$-norm.

\subsection*{Acknowledgements}
We thank Mariusz Mirek for numerous corrections.

\subsection{Notation}\label{sec:notation}
\subsubsection{Standard conventions}
We use $\N$ to denote the positive integers.  For real $X \geq 1$, write $[X] = \{ 1,2, \ldots, \floor{X}\}$.  A complex-valued function is \emph{1-bounded} if the modulus of the function does not exceed 1.

We use counting measure on $\Z$, so that for $f,g :\Z \to \C$ we have
$$
\ang{f,g} := \sum_x f(x)\overline{g(x)}\qquad \text{and}\qquad \norm{f}_{L^p} := \biggbrac{\sum_x |f(x)|^p}^{\recip{p}}.
$$ 
Any sum of the form $\sum_x$ is to be interpreted as a sum over $\Z$. 
We use Haar probability measure on $\T := \R/\Z$, so that for measurable $F : \T \to \C$ we have
$$
\norm{F}_{L^p} := \biggbrac{\int_\T |F(\alpha)|^p\intd\alpha}^{\recip{p}} = \biggbrac{\int_0^1 |F(\alpha)|^p\intd\alpha}^{\recip{p}}
$$
For $\alpha \in \T$ we write $\norm{\alpha}$ for the distance to the nearest integer.  

For a finite set $S$ and function $f:S\to\C$, denote the average of $f$ over $S$ by
\[
\E_{s\in S}f(s):=\f{1}{|S|}\sum_{s\in S}f(s).
\]

Given functions $f,g : G \to \C$ on an additive group with measure $\mu_G$ we define their convolution by 
\begin{equation}\label{convolution}
f*g(x) := \int_G f(x-y) g(y) \intd\mu_G,
\end{equation}
when this makes sense.

We define the Fourier transform of $f : \Z \to \C$ by 
\begin{equation}\label{Fourier transform}
\hat{f}(\alpha) := \sum_x f(x) e(\alpha x) \qquad (\alpha \in \T),
\end{equation}
again, when this makes sense.  Here $e(\alpha)$ stands for $e^{2\pi i \alpha}$.

The \emph{difference function} of $f : \Z \to \C$ is the function $\Delta_h f : \Z \to \C$ given by 
\begin{equation}\label{eq:diff}
\Delta_hf(x) = f(x) \overline{f(x+h)}.
\end{equation}
Iterating, we set
$$
\Delta_{h_1, \dots, h_s} f := \Delta_{h_1} \dots \Delta_{h_s} f.
$$
This allows us to define the \emph{Gowers $U^s$-norm}
\begin{equation}\label{Us def}
\norm{f}_{U^s} := \brac{\sum_{x, h_1, \dots, h_s} \Delta_{h_1, \dots, h_s} f(x)}^{1/2^s}.
\end{equation}
When $S \subset \Z$ we define the \emph{localised} Gowers $U^s$-norm
\begin{equation}\label{local Us def}
\norm{f}_{U^s(S)} := \norm{f1_S}_{U^s}.
\end{equation}
Notice that the left-hand side of \eqref{0eq3.4} is equal to 
$$
\sum_x \norm{f}_{U^1(x + q\cdot [N'])}.
$$


For a function $f$ and positive-valued function $g$, write $f \ll g$ or $f = O(g)$ if there exists a constant $C$ such that $|f(x)| \le C g(x)$ for all $x$. We write $f = \Omega(g)$ if $f \gg g$.  We sometimes opt for a more explicit approach, using $C$ to denote a large absolute constant, and $c$ to denote a small positive absolute constant.  The values of $C$ and $c$ may change from line to line. 
\subsubsection{Local conventions}
Up to normalisation, all of the above are well-used in the literature. Next we list notation specific to our paper. We have tried to minimise this in order to aid the casual reader.  

The quantity $(N/q)^{1/2}$ appears repeatedly in our arguments, where $q$ is an integer fixed throughout the majority of our paper. Unless otherwise specified, we therefore adopt the convention that
\begin{equation}\label{M def}
M:= \floor{\sqrt{N/q}}.
\end{equation}

Define the \emph{counting operator} on the functions $f_i : \Z \to \C$  by
\begin{equation}\label{counting op}
\Lambda_q(f_0, f_1, f_2) := \sum_{x \in \Z} \sum_{y \in \N} f_0(x)f_1(x+y) f_2(x+qy^2).
\end{equation}
When the $f_i$ all equal $f$ we simply write $\Lambda_q(f)$.

For a real parameter $H \geq 1$, we use $\mu_H : \Z \to [0,1]$ to represent the following normalised Fej\'er kernel
\begin{equation}\label{fejer}
\mu_H(h) := \recip{\floor{H}} \brac{1 - \frac{|h|}{\floor{H}}}_+ = \frac{(1_{[H]} * 1_{[H]} )(h)}{\floor{H}^2}.
\end{equation}
For a multidimensional vector $h \in \Z^d$ we write
\begin{equation}\label{multidim fejer}
\mu_H(h) := \mu_H(h_1)\dotsm \mu_H(h_d).
\end{equation}
We observe that this is a probability measure on $\Z^d$ with support in the box $(-H, H)^d$.

\subsection{An outline of our argument}\label{sec3}

\subsubsection{The density increment}\label{ss3.1}

Our proof proceeds via a density increment argument, the same method of proof used by Roth~\cite{RothCertainI} and  Gowers~\cite{GowersNewFour,GowersNew} to bound the size of sets lacking arithmetic progressions. In Gowers' formulation, if $A\subset[N]$ has density $\delta:=|A|/N$ and lacks (say) a $4$-term arithmetic progression, then either $N \ll \exp\exp\brac{\delta^{-O(1)}}$ or there exists a progression $P=a+q\cdot[N']$ of length $N'\gg  N^{\delta^{O(1)}}$ on which $A$ has increased density $|A\cap P|/|P|\geq \delta+\delta^{O(1)}$. 
Consider the rescaled version $A_1\subset[N']$ of $A\cap P$ defined by
\begin{equation}\label{3.??}
A':=\{n\in[N']:a+qn\in A\cap P\},
\end{equation}
and note that   $A'$ also lacks 4-term progressions. We then repeat this process with $A'$ in place of $A$. This iteration cannot continue indefinitely; indeed since the density cannot exceed one, the procedure must terminate in $O(\delta^{-O(1)})$ steps. The only explanation for termination is that the the length $N''$ of the interval at the final stage of our iteration is too short $N'' \ll \exp\exp\brac{\delta^{-O(1)}}$, and since $N'' \geq N^{\exp\brac{-\delta^{O(1)}}}$, this allows us to extract a bound on $\delta$.

The success of the above argument  relies crucially on the fact that 4-term arithmetic progressions are preserved under translation and scaling, and similarly the argument in~\cite{PrendivilleQuantitative} relies on the fact that arithmetic progressions with common difference equal to a perfect $d^{th}$ power are preserved under translation and scaling by a perfect $d^{th}$ power. These are very special properties lacked by the vast majority of polynomial progressions, including the nonlinear Roth configuration~\eqref{main config}.

Indeed, if $A\subset[N]$ has no nontrivial configurations of the form~\eqref{main config}, then the rescaled set $A'\subset[N']$ defined as in~\eqref{3.??} has no nontrivial configurations of the form $x$, $x+y$, $x+qy^2$. But if $q>N'$, then every subset of $[N']$ has this property because $x$ and $x+qy^2$ cannot both lie in $[N']$ when $y\neq 0$, and thus there is no hope of continuing the density increment argument in this case. In contrast, the largeness of $q>N'$ does not affect the arguments of \cite{RothCertainI, GowersNewFour,GowersNew}, because these papers consider progressions that are preserved under scaling by $q$ (or $q^d$ in the case of \cite{PrendivilleQuantitative}).

To deal with the poor behavior of the nonlinear Roth configuration under scaling, we prove a stronger density increment lemma that ensures that the arithmetic progression on which we find a density increment has very small step size. Our methods show that if $A\subset[N]$ has density $\delta:=|A|/N$ and lacks nontrivial configurations of the form~\eqref{main config}, then there exists a progression $P=a+q\cdot[N']$ with common difference $q\ll\delta^{-O(1)}$ and length $N'\gg\delta^{O(1)}N^{1/2}$ such that we have the density increment
\begin{equation}\label{eq3.1}
|A\cap P|/|P|\geq \delta+\delta^{O(1)}.
\end{equation}
Defining $A'\subset[N']$ to be the rescaled set as in~\eqref{3.??}, we thus see that $A'$ has increased density in $[N']$ and lacks nontrivial configurations of the form
\begin{equation}\label{eq3.2}
x,\ x+y,\ x+qy^2.
\end{equation}
The coefficient $q$ is sufficiently small that the methods employed to treat our original configuration~\eqref{main config} still apply to the new configuration~\eqref{eq3.2}, allowing us to prove a similar density increment result for sets lacking~\eqref{eq3.2}. We can thus continue the density increment iteration, which terminates in at most $O(\delta^{-O(1)})$ steps.  Such an argument yields a density bound of the form
$$
\delta \ll (\log\log N)^{-c}
$$
for some small absolute constant $c >0$. 

Our general density increment result is stated in Lemma~\ref{increment lemma} and concerns the configuration \eqref{eq3.2}.  It is a simple deduction from our inverse theorem (Theorem \ref{thm:inverse-zero}), or more precisely a generalisation (Theorem \ref{thm:inverse-one}) of our inverse theorem, extending from the nonlinear Roth configuration \eqref{main config} to its dilated analogue \eqref{eq3.2}.  In the remainder of this section we describe the ideas behind our inverse theorem, Theorem \ref{thm:inverse-zero}.

\subsubsection{Quantitative concatenation}\label{ss3.2}

To prove Theorem \ref{thm:inverse-zero}, we first prove that our counting operator
\begin{equation}\label{counting op 1}
\E_{x \in [N]} \E_{y \in [M]} f_0(x)f_1(x+y) f_2(x+y^2) 
\end{equation}
is controlled by the $U^5$-norm of $f_2$. The purpose of this subsection is to sketch how we do this with polynomial bounds.

By repeatedly applying both the Cauchy--Schwarz inequality and the van der Corput inequality, we show in \S\ref{pet} that, when $f_0,f_1,f_2:\Z\to\C$ are $1$-bounded functions supported in the interval $[N]$, largeness of the counting operator~\eqref{counting op 1} implies largeness of the sum
\begin{equation}\label{eq3.5}
\sum_{a,b\in[N^{1/2}]}\sum_{h_1,h_2,h_3\in[N^{1/2}]}\sum_x\Delta_{ah_1,bh_2,(a+b)h_3}f_2(x).
\end{equation}
This deduction is made following the PET induction scheme of Bergelson and Leibman \cite{BergelsonLeibmanPolynomial}. The gain in working with the counting operator \eqref{eq3.5} over \eqref{counting op 1} is that univariate polynomials such as $y^2$, whose image constitute a sparse set, have been replaced by bilinear forms such as $ah_1$, whose image is much denser

In \S\S\ref{arithmetic inverse}--\ref{concat sec}, we show that largeness of~\eqref{eq3.5} implies largeness of $\|f_2\|_{U^5}$.  If there were no dependence between the coefficients of the $h_i$ in~\eqref{eq3.5}, then it would be easy to bound~\eqref{eq3.5} in terms of $\|f_2\|_{U^3}$. 
We illustrate why this is the case for the sum
\begin{equation}\label{eq3.6}
\sum_{a,b,c\in[N^{1/2}]}\sum_{h_1,h_2,h_3\in[N^{1/2}]}\sum_x\Delta_{ah_1,bh_2,ch_3}f_2(x).
\end{equation}
The following fact is key, the formal version of which is Lemma \ref{densifying difference functions}. 
\begin{claim}\label{densify binary}
If $\displaystyle \sum_{a, h\in [N^{1/2}]} \sum_x \Delta_{ah} f(x)$ is large then so is $\displaystyle \sum_{k\in (-N, N)} \sum_x \Delta_{k} f(x)$.
\end{claim}
\begin{proof}[Sketch proof]
 Apply the Cauchy--Schwarz inequality to double the $a$ and $h$ variables, yielding a bound in terms of 
\begin{equation}\label{eq3.7}
\sum_{a,a'\in[N^{1/2}]}\sum_{h,h'\in[N^{1/2}]}\sum_x\Delta_{ah-a'h'}f(x).
\end{equation}
For a random choice of $a,a'\in[N^{1/2}]$, the progression $a\cdot[N^{1/2}]-a'\cdot[N^{1/2}]$ covers a large portion of the interval $(-N,N)$ relatively smoothly. One can make this intuition rigorous and thus deduce largeness of the sum
$
\sum_{k \in (-N, N)}\sum_x\Delta_{k}f(x).
$\end{proof}
Applying Claim \ref{densify binary} three times allows us to replace each of $ah_1$, $bh_2$  and $ch_3 $ in \eqref{eq3.6} with $k_1, k_2, k_3 \in (-N, N)$, yielding largeness of $\norm{f_2}_{U^3}$.  

The problem remains of how to handle the dependency between the differencing parameters in \eqref{eq3.5}. If we were not concerned with  quantitative bounds, we could apply a `concatenation' theorem of Tao and Ziegler~\cite[Theorem 1.24]{TaoZieglerConcatenation} to obtain largeness of the $U^9$-norm of $f_2$.  However, the qualitative nature of this argument means that it cannot be used to obtain bounds in the nonlinear Roth theorem. In its place we prove Theorem~\ref{global U5}, which is a special case of \cite[Theorem 1.24]{TaoZieglerConcatenation}, using a very different argument that gives polynomial bounds. We spend the remainder of this subsection sketching the argument.

We begin by viewing  \eqref{eq3.5} as the average
\begin{equation}\label{eq3.9}
\sum_{a, h_1 \in [N^{1/2}]} \norm{\Delta_{ah_1} f_2}_a,
\end{equation} 
where
\begin{equation}\label{eq3.8}
\|f\|_a^4:=\sum_{b\in[N^{1/2}]}\sum_{h_2,h_3\in[N^{1/2}]}\sum_x\Delta_{bh_2,(a+b)h_3}f(x)
\end{equation}
One can view this as an average of 2-dimensional Gowers box norms where, for fixed $b$, the inner sum corresponds to a box norm in the `directions' $b$ and $a+b$.
Note that if we could bound the quantity $\|\Delta_{a h_1}f_2\|_a$ in terms of the $U^4$-norm of $\Delta_{a h_1}f_2$ for many pairs $(a,h_1)$, then by Claim \ref{densify binary} we deduce largeness of the $U^5$-norm of $f_2$. We show that, on average, one can indeed control $\|\cdot\|_a$ in terms of $\|\cdot\|_{U^4}$, with polynomial bounds. The following can be extracted from the proof of (the more general) Theorem \ref{global U5}.
\begin{lemma}\label{lem3.4}
For each $a\in[N^{1/2}]$ let $f_a:\Z\to\C$ be a $1$-bounded function supported in the interval $[N]$. Suppose that
\[
\E_{a\in[N^{1/2}]}\|f_a\|_a^4\geq \delta \norm{1_{[N]}}_a^4.
\]
Then
\[
\E_{a\in[N^{1/2}]}\|f_a\|_{U^4}^{16}\gg \delta^{O(1)}\norm{1_{[N]}}_{U^4}^{16}.
\]
\end{lemma}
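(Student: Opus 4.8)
The plan is to express the box norm $\|f_b\|_b^4$ from~\eqref{eq3.8} as a sum, over three difference parameters, of a quantity that looks like a $U^2$-type count in the remaining variable, and then to run the averaging argument sketched for~\eqref{eq3.6}--\eqref{eq3.7} in each of the directions $a[N^{1/2}]$ and $(a+b)[N^{1/2}]$ separately. Concretely, unpacking Definition~\ref{def2.2} with $d=2$, $Q_1=a[N^{1/2}]$, $Q_2=(a+b)[N^{1/2}]$ gives
\[
\|f_b\|_b^4=\E_{a\in[N^{1/2}]}\sum_{x}\ \sum_{h_1,h_1'\in a[N^{1/2}]}\ \sum_{h_2,h_2'\in (a+b)[N^{1/2}]}\Delta'_{(h_1,h_1')}\Delta'_{(h_2,h_2')}f_b(x),
\]
so writing $h_1=au$, $h_1'=au'$, $h_2=(a+b)v$, $h_2'=(a+b)v'$ with $u,u',v,v'\in[N^{1/2}]$, the hypothesis becomes
\[
\E_{b}\,\E_{a}\sum_{u,u',v,v'\in[N^{1/2}]}\sum_x \Delta'_{(au,au')}\Delta'_{((a+b)v,(a+b)v')}f_b(x)\ \geq\ \delta N^3.
\]

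First I would dispose of the diagonal-type degeneracies and pass to the ``stretched'' form: by Cauchy--Schwarz (doubling the $a$ variable, exactly as in the passage from~\eqref{eq3.6} to~\eqref{eq3.7}) applied to the outer $a$-average, largeness of the above forces largeness of an average over $a,c\in[N^{1/2}]$ and over difference parameters in $a[N^{1/2}]-c[N^{1/2}]$; since for a positive-proportion set of pairs $(a,c)$ the dilated difference set $a\cdot[N^{1/2}]-c\cdot[N^{1/2}]$ covers a fixed proportion of $(-N,N)$ with bounded multiplicity, one converts the average over this arithmetic structure into an honest average over an interval of length $\asymp N$, at the cost of a polynomial loss in $\delta$. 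Carrying this out in the first pair of directions replaces the constraint ``$h_1,h_1'\in a[N^{1/2}]$'' by ``$h_1,h_1'$ range over an interval of length $\gg\delta^{O(1)}N$'', and then repeating the same manoeuvre for the $(a+b)$-direction (note $(a+b)$ ranges over an interval of comparable length as $a$ varies, so the same covering estimate applies) does the same for $h_2,h_2'$. After these two rounds of Cauchy--Schwarz we are left, for a $\gg\delta^{O(1)}$-proportion of $b$, with
\[
\sum_x\sum_{h_1,h_1',h_2,h_2'}\Delta'_{(h_1,h_1')}\Delta'_{(h_2,h_2')}f_b(x)\ \gg\ \delta^{O(1)}N^3,
\]
the sums now over genuine intervals; a standard completion/positivity argument (as in the proof of Lemma~\ref{def2.5}, or simply the box-norm-to-$U^s$-norm comparison) upgrades this to $\|f_b\|_{U^2}^4\gg\delta^{O(1)}N^3$, hence after raising to the fourth power and reorganizing, $\|f_b\|_{U^4}^{16}\gg\delta^{O(1)}N^5$ for those $b$. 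Averaging over the good set of $b$ (which has density $\gg\delta^{O(1)}$) and using $1$-boundedness to control the contribution of the bad $b$ trivially yields $\E_b\|f_b\|_{U^4}^{16}\gg\delta^{O(1)}N^5$.

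I expect the main obstacle to be the second of the two ``covering'' steps: after doubling $a$, the two directions $a[N^{1/2}]$ and $(a+b)[N^{1/2}]$ are not independent — they share the variable $a$ — so one cannot naively double $a$ twice. The fix is to be careful about the order of operations: double $a$ once to linearize the first direction into a long interval, and only \emph{then} Cauchy--Schwarz in the (now still present) $a$-variable again, exploiting that $a\mapsto a+b$ is a bijection of $[N^{1/2}]$ for each fixed $b$ so that $(a+b)[N^{1/2}]$ enjoys the same equidistribution of dilated difference sets. Keeping track of which variables have been doubled, which intervals the difference parameters live in, and ensuring every loss is polynomial in $\delta$ (no logarithmic or worse losses creep in from the covering lemma) is the technical heart of the argument; everything else is bookkeeping with the Gowers--Cauchy--Schwarz inequality (Lemma~\ref{lem2.4}) and the elementary properties of $\mu_{\delta,M}$ recorded in Section~\ref{sec2}.
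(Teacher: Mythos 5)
There is a genuine gap, and it lies exactly where you flag the ``main obstacle,'' but your proposed fix does not resolve it. The issue is not merely bookkeeping. Write the inner sum over difference parameters as $\sum_{s,t} W(s,t)\,\Delta_{s,t}f_b(x)$ with
$W(s,t)=\E_{a\in[N^{1/2}]}w_a(s)w_{a+b}(t)$, where $w_a$ is the weight on multiples of $a$ coming from $a[N^{1/2}]-a[N^{1/2}]$. Since a single shared parameter $a$ must simultaneously satisfy $a\mid s$ and $(a+b)\mid t$, the support of $W$ inside $(-N,N)^2$ has density roughly $N^{-1/2}$: for a typical $(s,t)$ there is no $a\in[N^{1/2}]$ dividing $s$ with $a+b$ dividing $t$. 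Consequently the two directions cannot be ``linearized'' into genuine intervals --- even after Cauchy--Schwarz to double $a$, the doubled weight is still concentrated on a sparse, arithmetically-constrained set of $(s,s',t,t')$, not on a product of long intervals --- and your phrase ``double $a$ once, then Cauchy--Schwarz in the still-present $a$-variable again'' does not describe an operation that removes this constraint.

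This failure is reflected in the strength of your claimed conclusion: you derive $\|f_b\|_{U^2}^4\gg\delta^{O(1)}N^3$ and then upgrade to $U^4$ by monotonicity of Gowers norms, but $\|\cdot\|_b$ does \emph{not} control the $U^2$-norm. The averaged two-dimensional box norm is intrinsically a degree-four object; each locally-averaged direction contributes two to the Gowers degree (this is the content of Tao--Ziegler concatenation, Theorem~\ref{thm3.2}, with $d=2$), so the sharp output is $U^4$, and a $U^2$ conclusion would be a dramatic and false strengthening. The paper's proof of this lemma (Lemma~\ref{lem5.5} and Corollary~\ref{cor5.6}) is structurally quite different: it first proves an inverse theorem for the arithmetic box norm $\square^2_{c\,[M],d\,[M]}$ with $\gcd(c,d)$ small (Lemma~\ref{lem5.3}, Corollary~\ref{cor5.4}), producing a correlation $\sum_x f(x)l_{a+b}(x)r_a(x)$ with $r_a$ exactly $a$-periodic and $l_{a+b}$ almost-invariant under shifts by $a+b$; it then exploits those invariance properties to insert extra averaging, applies Lemma~\ref{lem5.8} to pass to a three-dimensional box norm, uses Fourier inversion and Gowers--Cauchy--Schwarz to reach $\|fl_{a+b}\|_{U^3}$, and finally uses the almost-invariance of $l_{a+b}$ once more to climb to $\|f\|_{U^4}$. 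The inverse theorem step and the careful tracking of invariance are exactly what substitute for the unavailable ``linearization'' you are trying to perform.
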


To finish this subsection, we briefly discuss the proof of this key lemma. For most choices of $a, b\in[N^{1/2}]$, the `directions' $b$ and $a+b$ of the box norm 
\begin{equation}\label{box norm}
\sum_{h_2,h_3\in[N^{1/2}]}\sum_x\Delta_{bh_2,(a+b)h_3}f_a(x)
\end{equation}
are close to `independent', in the sense that at least one of the directions $b$ and $a+b$ is large and together they have small greatest common divisor. The proof of Lemma~\ref{lem3.4} thus begins by viewing $\|\cdot\|_a$ as an average of box norms
\begin{equation}\label{real box norm}
\|f\|_{\square(X,Y)}^4:=\sum_{x_1,x_2\in X, y_1,y_2\in Y}f(x_1,y_1)\overline{f(x_1,y_2)f(x_2,y_1)}f(x_2,y_2).
\end{equation}
It is easy to show that largeness of $\|f\|_{\square(X,Y)}$ implies that $f$ correlates with a function of the form $(x,y)\mapsto l(x)r(y)$. We show, analogously, that provided $b$ and $a+b$ are not too small and have  greatest common divisor not too large, then largeness of the arithmetic box norm \eqref{box norm} implies that $f_a$ correlates with a product  $g_bh_{a+b}$ of 1-bounded functions, where $g_b$ is $b$-periodic and $h_{a+b}$ is almost periodic under shifts by integer multiples of $a+b$. As a consequence, for most $a\in[N^{1/2}]$, largeness of $\|f_a\|_{a}$ implies largeness of
\begin{equation}\label{eq3.10}
\sum_{b\in[N^{1/2}]}\sum_xf_a(x)g_b(x)h_{a+b}(x).
\end{equation}
In fact, an application of Cauchy--Schwarz allows us give an explicit description of $h_{a+b}$ in terms of $f_a$, namely we may take it to be of the form
\begin{equation}\label{h description}
h_{a+b}(x) = \E_{k\in [N^{1/2}]} f_a(x+(a+b)k)g_b(x+(a+b)k).
\end{equation}
This presentation makes apparent the almost periodicity of $h_{a+b}$.

\begin{claim}\label{hab claim}
Largeness of \eqref{eq3.10} implies that $\E_{b \in [N^{1/2}]}h_{a+b}$ has large $U^3$-norm.
\end{claim}

Let us first show why Claim \ref{hab claim} in turn implies that $f_a$ has large $U^4$-norm, completing our sketch proof of Lemma \ref{lem3.4}.  The expression \eqref{h description} and the triangle inequality for Gowers norms together imply that largeness of $\E_{b \in [N^{1/2}]}\norm{h_{a+b}}_{U^3}$ implies largeness of $\E_{b \in [N^{1/2}]}\norm{f_ag_b}_{U^3}$.  Utilising the $b$-periodicity of $g_b$ we have
\begin{equation}\label{fg product}
\norm{f_ag_b}_{U^3} = \E_{k \in [N^{1/2}]} \norm{f_a(\cdot)g_b(\cdot + bk)}_{U^3}.
\end{equation}
The product $f_a(\cdot)g_b(\cdot + bk)$ resembles a difference function in the direction $b$. Indeed the Gowers--Cauchy--Schwarz inequality (see \cite[Exercise 1.3.19]{TaoHigher}) shows that if \eqref{fg product} is large (on average over $b \in [N^{1/2}]$) then so is
$$
\E_{b,k \in [N^{1/2}]} \norm{\Delta_{bk}f_a}_{U^3}
$$
Largeness of $\norm{f_a}_{U^4}$ then follows from Claim \ref{densify binary}.

Finally we sketch the proof of Claim \ref{hab claim}.  The Cauchy--Schwarz inequality allows us to remove the weight $f_a(x)$ from \eqref{eq3.10} and deduce largeness of
$$
\sum_x\sum_{b,b' \in [N^{1/2}]} \overline{g_b(x)h_{a+b}(x)}g_{b'}(x)h_{a+b'}(x).
$$
Using the periodicity properties of $g_b$, $g_{b'}$ and $h_{a+b}$, this is approximately equal to
\begin{equation*}
\sum_x\sum_{\substack{b,b' \in [N^{1/2}]\\k_1, k_2, k_3 \in [N^{1/2}]}}\overline{g_b(x-bk_1)h_{a+b}(x-(a+b)k_2)}g_{b'}(x-b'k_3)h_{a+b'}(x).
\end{equation*}
Changing variables in $x$, we obtain largeness of the sum
\begin{multline*}
\sum_x\sum_{\substack{b,b' \in [N^{1/2}]\\k_1, k_2,k_3 \in [N^{1/2}]}}\overline{g_b(x+(a+b)k_2+b'k_3)h_{a+b}(x+bk_1+b'k_3)}\\
g_{b'}(x+bk_1+(a+b)k_2)h_{a+b'}(x+bk_1+(a+b)k_2+b'k_3).
\end{multline*}
The point here is that all but the last function have arguments depending on at most two of the bilinear forms $bk_1$, $(a+b)k_2$ and $b'k_1'$. This enables us to employ the Gowers--Cauchy--Schwarz inequality (in the form of Lemma \ref{box cauchy}) to deduce largeness of a sum similar to
$$
\sum_x\sum_{\substack{b,b'\in [N^{1/2}]\\ k_1, k_2, k_3\in [N^{1/2}]}}\Delta_{bk_1,\, (a+b)k_2,\, b'k_3}h_{a+b'}(x).
$$
The utility of this expression is that the directions of the differencing parameters are all `independent' of the direction of periodicity of $h_{a+b'}$.  Indeed the approximate $(a+b')$-periodicity of $h_{a+b'}$ means that one can replace $\Delta_y h_{a+b'}$ with $\E_k\Delta_{y + (a+b')k} h_{a+b'}$ at the cost of a small error. We thereby obtain largeness of
\begin{equation}\label{horrendous differences}
\sum_x\sum_{b, b' \in [N^{1/2}]}\sum_{\substack{k_1, k_2, k_3\in [N^{1/2}]\\ k_1', k_2', k_3'\in [N^{1/2}]}}\Delta_{bk_1 +(a+b')k_1',\, (a+b)k_2+ (a+b')k_2',\, b'k_3+(a+b')k_3'}h_{a+b'}(x).
\end{equation}
For a random triple $(a,b,b') \in [N^{1/2}]$ the greatest common divisor of the pairs $(b, a+b')$, $(a+b, a+b')$ and  $(b', a+b')$ are all small, and these are the pairs appearing in the differencing parameters of \eqref{horrendous differences}. The argument used to treat \eqref{eq3.7} may be therefore be employed  to replace \eqref{horrendous differences} with
$$
\sum_x\sum_{b'\in [N^{1/2}]}\sum_{k_1, k_2, k_3\in [N]}\Delta_{k_1 ,k_2,k_3}h_{a+b'}(x),
$$
and thereby yield Claim \ref{hab claim}.

\subsubsection{Degree lowering}\label{ss3.3}

After we have shown that the counting operator \eqref{counting op 1} is controlled by the $U^5$-norm of $f_2$, we carry out a `degree lowering' argument. This technique originated in the work~\cite{PelusePolynomial} in finite fields. The basic idea is that, under certain conditions, one can combine $U^s$-control with understanding of two-term progressions to deduce $U^{s-1}$-control. Repeating this gives a sequence of implications
\[
U^{5}\text{-control}\implies U^{4}\text{-control}\implies U^{3}\text{-control}\implies U^{2}\text{-control}\implies U^{1}\text{-control}.
\]
Despite the appearance of the $U^5$-norm, $U^4$-norm, and $U^3$-norm, the degree lowering argument, both in~\cite{PelusePolynomial} and here, does not require the $U^s$-inverse theorem for any $s\geq 3$. Instead it relies on Fourier analysis in the place of these inverse theorems.

As was mentioned in \S\ref{introduction}, adapting the degree lowering argument of~\cite{PelusePolynomial} to the integer setting requires several significant modifications. The first modification is that the $U^s$-control described above is control in terms of the $U^s$-norm of the dual function
\begin{equation}\label{sketch dual}
F(x):=\E_{y\in[N^{1/2}]}f_0(x-y^2)f_1(x+y-y^2).
\end{equation}
Thus, to begin the degree lowering argument, we must show that the counting operator \eqref{counting op 1} is controlled by the $U^5$-norm of the dual $\|F\|_{U^5}$. To do this, we use the fact that the counting operator is controlled by $\norm{f_2}_{U^5}$ together with a simple application of the Cauchy--Schwarz inequality, for details see \S\ref{inverse theorem section}. 

We illustrate our degree lowering procedure by sketching how $U^3$-control of the dual \eqref{sketch dual} implies $U^2$-control, starting from the assumption that
\[
\|F\|_{U^3}^8\geq\delta \norm{1_{[N]}}_{U^3}^8.
\]
Using the fact that $\|F\|_{U^3}^8=\sum_h\|\Delta_hF\|_{U^2}^4$ and applying the $U^2$-inverse theorem, we deduce the existence of a function $\phi:\Z\to\T$ such that, for at least $\gg\delta N$ choices of differencing parameter $h$, we have
\begin{equation}\label{eq3.11}
\left|\sum_{x\in[N]}\Delta_hF(x)e(\phi(h)x)\right|\gg\delta N.
\end{equation}
Note that if, in the above inequality, we could replace the function $\phi(h)$ by a constant $\beta\in\T$ not depending on $h$, then we could easily deduce largeness of $\|F\|_{U^2}$. Indeed, writing $g(h)$ for the conjugate phase of the sum inside absolute values, this would give 
$$
\sum_{x, h}\overline{g(h)}\overline{F(x+h)}F(x)e(\beta x)\gg\delta^{O(1)}N^3,
$$
and the usual argument\footnote{One can either use orthogonality and extraction of a large Fourier coefficient, as in the proof of Lemma \ref{U2 inverse}, or use two applications of Cauchy--Schwarz.} showing $U^2$-control of the equation $x+y = z$ implies that $\|F\|_{U^2}^4\gg\delta^{O(1)}\norm{1_{[N]}}_{U^2}$. It thus remains to show that such a $\beta$ exists.

Expanding the definition of the difference and dual functions in \eqref{eq3.11}, and using the Cauchy--Schwarz inequality (as is done in greater generality in the proof of Lemma~\ref{dual difference interchange}), one can show that there exists $h'$ such that for many $h$ satisfying \eqref{eq3.11} we have
\[
\left|\sum_x\sum_{y\in[N^{1/2}]}\Delta_{h-h'}f_0(x)\Delta_{h-h'}f_1(x+y)e([\phi(h)-\phi(h')][x+y^2])\right|\gg \delta^{O(1)}N^{3/2}
\]
Further application of Cauchy--Schwarz allows us to remove the difference functions from the above inequality and deduce largeness of the exponential sum
\[
\sum_{z\in[N^{1/2}]}\left|\sum_{y\in[N^{1/2}]}e(2\sqbrac{\phi(h)-\phi(h')} yz)\right|.
\]
Summing the inner geometric progression and using a Vinogradov-type lemma then shows that $\phi(h)-\phi(h')$ is major arc. There are very few major arcs, so the pigeonhole principle gives the existence of $\beta_0\in\T$ such that $\phi(h)-\phi(h')$ is very close to $\beta_0$ for many $h\in(-N,N)$ that also satisfy~\eqref{eq3.11}. We may therefore take $\beta=\beta_0+\phi(h')$ in the argument following \eqref{eq3.11}.

\section{The density increment}\label{increment sec}
In this section we prove Theorem \ref{main} using the following lemma, which is derived from our inverse theorem in \S\ref{increment proof sec}.
\begin{lemma}[Density increment]\label{increment lemma}
Suppose that $A \subset [N]$ satisfies $|A| \geq \delta N$ and lacks the configuration 
\begin{equation}\label{q config}
x,\ x+y, \ x+qy^2 \qquad (y \neq 0).
\end{equation} 
Then either $N \ll q^3\delta^{-O(1)}$ or there exists $q' \ll \delta^{-O(1)}$ and $N' \gg \delta^{O(1)}q^{-3/2}N^{1/2} $
 such that
\begin{equation}\label{increment}
 |A \cap (a + qq'\cdot[N'])| \geq \brac{\delta + \Omega\brac{\delta^{O(1)}}} N'.
\end{equation}
\end{lemma}

\begin{proof}[Proof of Theorem \ref{main} given Lemma \ref{increment lemma}]
Note first that if $A$ lacks the configuration \eqref{q config}, then the set
\[
\{x :a+qq'x\in A\},
\]
lacks configurations of the form
\[
x,\ x+y,\ x+q^2q'y^2 \qquad (y \neq 0).
\]

Let $A \subset [N]$ have size $\delta N$ and lack \eqref{main config}.  Setting $A_0 := A$, $N_0 := N$ and $q_0 = 1$, let us suppose we have a  sequence of tuples $(A_i, N_i, q_i)$ for $i = 0, 1, \dots, n$ which each satisfy the following:
\begin{enumerate}[(i)]
\item  $A_i$ lacks configurations of the form 
$$
x,\ x+y,\ x+q_0^{2^i} q_1^{2^{i-1}}\dotsm q_{i-1}^2 q_i y^2 \qquad (y \neq 0).
$$
\item\label{qi upper bound}  $q_i \ll \delta^{-O(1)}$;
\item  $A_i \subset [N_i]$ and for $i \geq 1$ we have 
$$
\frac{|A_i|}{N_i} \geq \delta + \Omega\brac{i\delta^{O(1)}};
$$
\item\label{length lower bound}  for $i \geq 1$ we have the lower bound 
$$
N_i \gg \delta^{O(1)} \brac{q_0^{2^{i-1}}\dotsm q_{i-1}}^{-3/2}N_{i-1}^{1/2}.
$$
\end{enumerate}  

By Lemma \ref{increment lemma}, at stage $n$ we either have
\begin{equation}\label{termination condition}
N_n \ll \brac{q_0^{2^n} q_1^{2^{n-1}}\dotsm q_{n-1}^2 q_n}^3\delta^{-O(1)} 
\end{equation}
and the process terminates, or we obtain $(A_{n+1}, N_{n+1}, q_{n+1})$ satisfying conditions (i)--(iv).

The density of $A_n$ on $[N_n]$ cannot exceed 1, so the process must terminate at some $n \ll \delta^{-O(1)}$.  At the point of termination, the  smallness assumption \eqref{termination condition} must hold, so that
\begin{equation*}\label{N_i upper bound}
N_n \leq O(1/\delta)^{O(2^n)} \leq \exp\exp\brac{O\brac{\delta^{-O(1)}}}.
\end{equation*}
On the other hand, iteratively applying the lower bound \eqref{length lower bound}, we have 
\begin{equation*}\label{N_i lower bound}
\begin{split}
N_n  & \geq \frac{N_{n-1}^{1/2}}{\brac{q_0^{2^{n-1}}\dotsm q_{n-1}}^{3/2}\delta^{-O(1)}}\\
&  \geq N^{1/2^n}\sqbrac{\brac{q_0^{2^{n-1}}\dotsm q_{n-1}}^{3/2}\delta^{-O(1)}}^{-(1 + \recip{2} + \recip{4} + \dots + 2^{1-n}) } \\
& \geq  \exp\brac{\log N \exp\brac{-O\brac{\delta^{-O(1)}}}}\exp\exp\brac{-O\brac{\delta^{-O(1)}}}.
\end{split}
\end{equation*}
Taking logarithms and comparing upper and lower bounds for $N_n$ yields the bound claimed in Theorem \ref{main}.
\end{proof}

\section{PET induction}\label{pet}

We prove Lemma \ref{increment lemma} over the course of \S\S\ref{pet}--\ref{increment proof sec}. We begin in \S\S\ref{pet}--\ref{concat sec} by showing how the counting operator $\Lambda_q(f_0, f_1, f_2)$, as defined in \eqref{counting op}, is controlled by the $U^5$-norm of $f_2$.  This argument starts with the PET induction scheme of Bergelson--Leibman \cite{BergelsonLeibmanPolynomial}, which in some sense `linearises' a polynomial progression, replacing univariate polynomials such as $y^2$ with bilinear forms $ah$.  The outcome of this procedure is Lemma \ref{linearisation}.

For the following, we recall our definition \eqref{fejer} of the Fej\'er kernel $\mu_H$ and our definition  \eqref{eq:diff} of the difference operator $\Delta_h$.
\begin{lemma}[van der Corput inequality]\label{vdc}
Let $f:\Z\to\C$ be 1-bounded,  $J$ an interval of at most $M$ integers and $ H \geq 1$. For each $h$ there exists an interval $J(h) \subset J$ such that
\[
\biggabs{\sum_{y\in J}f(y)}^2\leq \brac{M+H}\sum_h\mu_H(h)\sum_{y\in J(h)}\Delta_h f(y) .
\]
\end{lemma}

\begin{proof}
This is standard, see for instance \cite[Lemma 3.1]{PrendivilleQuantitative}.\end{proof}

The next result uses the multivariate Fej\'er kernel $\mu_H(h)$ as defined in \eqref{multidim fejer}.

\begin{lemma}[Difference functions control linear configurations]\label{difference control}
 Let $f_{i}:\Z\to\C$ be $1$-bounded functions each with support of size at most $N$. Let $J\subset \Z$ be an interval of size at most $M$.
Then for any $a,b \in \Z$ and $H\geq 1$ we have
\begin{multline}\label{difference control ineq}
\biggabs{\sum_{x}\sum_{y\in J}f_{0}(x)f_{1}(x+ay)f_{2}(x+by)f_{3}(x+(a+b)y)}^8\\
 \leq N^7(M+H)^8\sum_h\mu_{H}(h)\sum_{x}\Delta_{ah_1,bh_2,(a+b)h_3}f_{3}(x).
\end{multline}
\end{lemma}

\begin{proof}
Applying Cauchy-Schwarz in the $x$ variable gives
\begin{multline*}
\biggabs{\sum_{x }\sum_{y\in J}f_{0}(x)f_{1}(x+ay)f_{2}(x+by)f_{3}(x+(a+b)y)}^2\\
 \leq
N\sum_{x}\bigg|\sum_{y\in J}f_1(x+ay)f_2(x+by)
f_3(x+(a+b)y)\bigg|^2.
\end{multline*}
 Bounding the inner sum using van der Corput's inequality (Lemma \ref{vdc}) and making the change of variables $x \mapsto x-ay$ (valid since $x$ is ranging over $\Z$), the latter is at most  
$$
N(M+H)\sum_{h_1}\mu_H(h_1)\sum_{x }\sum_{y \in J(h_1)}\Delta_{ah_1}f_1(x) \Delta_{bh_1} f_2(x+(b-a)y) \Delta_{(a+b)h_1}f_3(x+by),
$$
for some intervals $J(h_1) \subset J$.

Making use of the fact that $\mu_H$ is a probability measure, we repeat the procedure of applying Cauchy--Schwarz, van der Corput then a change of variables, to deduce that 
\begin{multline*}
\biggabs{\sum_{x}\sum_{y\in J}f_{0}(x)f_{1}(x+ay)f_{2}(x+by)f_{3}(x+(a+b)y)}^4\\
 \leq N^3(M+H)^3 \sum_{h_1, h_2}\mu_H(h_1,h_2)\sum_{x}\sum_{y\in J(h_1, h_2)} \Delta_{bh_1, (b-a)h_2} f_2(x) \Delta_{(a+b)h_1, bh_2}f_3(x+ay),
\end{multline*}
for some intervals $J(h_1, h_2) \subset J$. A final iteration of the same procedure then yields \eqref{difference control ineq}.
\end{proof}

Before embarking on the following, we remind the reader of our conventions \eqref{M def} and \eqref{counting op} regarding $M$ and $\Lambda_q$.
\begin{lemma}[Linearisation]\label{linearisation}
Let $f_i:\Z\to\C$ be $1$-bounded functions with support in $[N]$. Then for any  $1 \leq H\leq  M$ we have
\begin{equation}\label{linearised ineq}
\abs{\recip{NM}\Lambda_q(f_0, f_1, f_2) }^{32} \ll
 \sum_{a,b, h}\mu_{M}(a)\mu_{M}(b)\mu_H(h)\E_{x\in [N]} \Delta_{2q(a+b)h_1,\, 2qbh_2,\, 2qah_3}f_2(x).
\end{equation}
\end{lemma}
\begin{proof}
We repeat the procedure given in the proof of Lemma \ref{difference control}, applying Cauchy-Schwarz, followed by van der Corput's inequality and a change of variables. We note that for $y\in \N$, if $f_0(x)f_1(x+y)f_2(x+qy^2) \neq 0$ for some $x$, then $y \in [M]$, since $qy^2 = (x+qy^2) - x \in [N]-[N]$.  A first application of this procedure gives
\begin{multline*}
\abs{\recip{NM}\Lambda_q(f_0, f_1, f_2) }^2 \leq\\
 \frac{2}{NM} \sum_{a}\mu_M(a) \sum_{x} \sum_{y\in J(a)} \Delta_{a}f_1(x)f_2\bigbrac{x+qy^2-y}\overline{f_2\bigbrac{x+q(y+a)^2-y}},
\end{multline*}
for some intervals $J(a) \subset [M]$.  A second application then gives
\begin{multline*}
\abs{\Lambda_q(f_0, f_1, f_2) }^4 \ll \recip{NM}
 \sum_{a,b}\mu_M(a)\mu_M(b) \sum_{x} \sum_{y\in J(a,b)} f_2(x)\overline{f_2\bigbrac{x+2qay + qa^2}}\\ \overline{f_2\bigbrac{x+2qby + qb^2-b}}f_2\bigbrac{x+2q(a+b)y + q(a+b)^2-b},
\end{multline*}
for some intervals $J(a,b) \subset [M]$.
Applying Lemma \ref{difference control} to bound the inner sum over $x$ and $y$, we obtain \eqref{linearised ineq}. 
\end{proof}

\section{An inverse theorem for the arithmetic box norm}\label{arithmetic inverse}

The objective in this section is to characterise those 1-bounded functions $f : \Z \to \C$ with support in $[N]$ for which the following quantity is large
\begin{equation}\label{b}
\sum_{h,x}\mu_{H}(h)\Delta_{ah_1,bh_2}f(x).
\end{equation}
One can think of this as an arithmetic analogue of the two-dimensional `box norm' \eqref{real box norm}, with differencing parameters pointing in the `directions' $a$ and $b$.  In our eventual application we are able to ensure that $a$ and $b$ are a generic pair of integers from the interval $[N^{1/2}]$.  In particular, at least one of them has size proportional to $N^{1/2}$ and their highest common factor is small.  One may think of this as a proxy for linear independence. 

We begin by characterising largeness of \eqref{b} when the directions are coprime.

\begin{lemma}[Inverse theorem for the arithmetic box norm]
Let $a,b$ be positive integers with $\gcd(a,b) = 1$. 
Suppose that $f:\Z\to\C$ is $1$-bounded with support in the interval $[N]$ and satisfies
\begin{equation}\label{arith1}
\sum_{h,x}\mu_{H}(h)\Delta_{ah_1,bh_2}f(x)\geq \delta N.
\end{equation}
Then there exist 1-bounded functions $g, h:\Z\to\C$ such that
\begin{itemize}
\item $g$ is $a$-periodic, in the sense that $g(x+a) = g(x)$ for all $x$;
\item $h$ is approximately $b$-periodic, in the sense that for any $\eps > 0$ we have
$$
\hash\set{x \in [N] : h(x+by) \neq h(x) \text{ for some } |y| \leq \eps N/b}  \leq \brac{1+\tfrac{2\eps N}{b}}\brac{1 + \tfrac{N}{a}};
$$
\end{itemize}
and furthermore
\begin{equation}\label{1st box inverse}
\biggabs{\sum_{x}f(x)g(x)h(x)}\geq \delta \floor{H}^2  - 2\brac{\tfrac{H}{a} + \tfrac{Hb}{N}}\floor{H}^2.
\end{equation}
\end{lemma}
\begin{remark}
In parsing the above inequalities, it may be helpful to keep in mind that in our application $a$, $b$ and $H$ are of order $\sqrt{N}$, with $H$ smaller than $\delta a$, in which case the lower bound in \eqref{1st box inverse} becomes $\Omega(\delta H^2)$.
\end{remark}
\begin{proof}
The majority of our proof is concerned with manipulating \eqref{arith1} until we can interpret it as a genuine box norm \eqref{real box norm}, and thereby apply the box norm inverse theorem. The essential observation is that, since $\gcd(a,b)=1$, every integer $x$ can be uniquely represented in the form
\[
x=ay+bz\qquad (y\in\Z,\ z\in[a]).
\]
We note that if $x \in [N]$ then the constraint on $z$ forces $y$ to lie in the range $-b < y < N/a$.

Defining $F:\Z\times\Z\to\C$ by $F(y,z):=f(ay+bz)$, the left-hand side of \eqref{arith1} becomes
$$
\sum_{y,y' \in \Z} \sum_{\substack{z \in [a]\\ z' \in \Z}}F(y, z)\overline{F(y', z)}
\overline{F(y, z')}F(y', z')\mu_H(y'-y)\mu_H(z'-z).
$$
If $z'$ and $z$ contribute to the above sum then
$
z' \in z + (-H, H) \subset (-H+1, a +H).
$
Hence we can restrict the range of summation of $z'$ to $[a]$, at the cost of perturbing the sum by at most
$
2\floor{H}(\frac{N}{a} + b).
$  
It follows that
\begin{multline*}
\biggabs{\sum_{y,y'}\sum_{z, z'\in [a]} F(y, z)\overline{F(y', z)}
\overline{F(y, z')}F(y', z')\mu_H(y'-y)\mu_H(z'-z)}\\ \geq \delta N - 2\floor{H}\brac{\tfrac{N}{a} + b}.
\end{multline*}

We remove the Fej\'er kernels by Fourier expansion:
\begin{multline*}
\sum_{\substack{y,y' \\ z,z'\in[a]}}F(y,z)\overline{F(y',z)F(y,z')}F(y',z')\mu_H(y'-y)\mu_H(z'-z) =\\
\int_{\T^2}\sum_{\substack{y,y' \\ z,z'\in[a]}}F(y,z)\overline{F(y',z)F(y,z')}F(y',z')\hat{\mu}_H(\alpha)\hat{\mu}_H(\beta)e(\alpha(y'-y)+\beta(z'-z))\intd\alpha\intd\beta \\
\leq\left(\int_{\T}|\hat{\mu}_H(\alpha)|\intd\alpha\right)^2\sup_{\alpha, \beta\in\T}\biggabs{\sum_{\substack{y,y' \\ z,z'\in[a]}}F(y,z)\overline{F_2(y',z)F_3(y,z')}F_4(y',z')},
\end{multline*}
where $F_2(y',z):=F(y',z)e(\beta z)$, $F_3(y,z'):=F(y,z')e(\alpha y)$, and $F_4(y',z')$ $:=F(y',z')e(\alpha y'+\beta z')$.

We observe that $\hat{\mu}_H(\alpha)=|\hat{1}_{[H]}(\alpha)|^2/\floor{H}^2$, which implies that $\int_\T|\hat{\mu}(\alpha)|d\alpha=\floor{H}^{-1}$. Therefore
\begin{equation}\label{arith2}
\biggabs{\sum_{\substack{y,y' \\ z,z'\in[a]}}F(y,z)\overline{F_2(y',z)F_3(y,z')}F_4(y',z')} \geq \delta \floor{H}^2 N - 2\floor{H}^3\brac{\tfrac{N}{a} + b},
\end{equation}
for $1$-bounded functions $F_i:\Z\times[a]\to\C$ of the form $F_i(y,z)=f(ay+bz)e(\alpha_1y+\alpha_2z)$. Since $f$ is supported on $[N]$, there are at most $N$ pairs $(y',z')\in\Z\times[a]$ for which $F_4(y',z')\neq 0$. Thus, by pigeonholing in $y'$ and $z'$ in~(\ref{arith2}) and setting $L(y):=\overline{F_3(y,z')}$ and $R(z):=\overline{F_2(y',z)}F_4(y',z')$, we get that
\[
\biggabs{\sum_{y}\sum_{z\in[a]}F(y,z)L(y)R(z)}\geq \delta \floor{H}^2 - 2\floor{H}^3\brac{\tfrac{1}{a} + \tfrac{b}{N}}.
\]

For each $x\in\Z$, define $l(x)\in\Z$ and $r(x)\in[a]$ by $x=al(x)+br(x)$, and set $g(x):=R\circ r(x)$ and $h(x):=L\circ l(x)$. Then
it remains to check the invariance properties of $g$ and $h$. To see that $g(x)=g(x+ay)$ for all $x,y\in\Z$, just note that $r(x)=r(x+ay)$ for every $x,y\in\Z$. 

Finally we establish that, for most $x\in[N]$, we have $h(x)=h(x+bz)$ when $|z|\leq \ve N/b$.  First note that $l(x)=l(x+bz)$ whenever $\ve N/b < r(x)\leq a-\ve N/b$. Hence for this to fail, $x$ must lie in one of at most $1+2\ve N/b$ congruence classes modulo $a$. The number of such $x$ lying in the interval $[N]$ is at most
\[
\brac{1+\frac{2\eps N}{b}}\brac{1 + \frac{N}{a}}.
\]

\end{proof}

The lemma also yields a result in the situation in which $\gcd(a,b)>1$.  In proving this we take the opportunity to smooth out the $b$-invariance of the $h$ function, whilst also giving an explicit description of $h$ in terms of $f$.  More concretely, we replace $h$ with a projection of $fg$ onto cosets of $b\cdot \Z$.
\begin{lemma}\label{arithcor} 
There exists an absolute constant $c>0$ such that on assuming  $1 \leq H \leq c\delta^3 N^{1/2}$ and $1 \leq K \leq c\delta^2 H^2 N^{-1/2}$ the following holds. Let $a,b\in [-N^{1/2},N^{1/2}]$ with $\gcd(a,b)\leq \delta^{-1}$ and $|a| \geq \delta N^{1/2}$.  Suppose that $f:\Z\to\C$ is $1$-bounded, supported on the interval $[N]$, and satisfies
\begin{equation*}\label{arith1}
\biggabs{\sum_{h, x}\mu_{H}(h)\Delta_{ah_1,bh_2}f(x)}\geq\delta N.
\end{equation*}
Then there exists a 1-bounded $a$-periodic function $g$ such that 
\begin{equation}\label{improved correlation}
\sum_{x}  f(x) g(x)\sum_k\mu_K(k) \overline{f(x+bk) g(x+bk)} \gg \delta^2 H^4/N.
\end{equation}
\end{lemma}

\begin{proof}
Since \eqref{arith1} and \eqref{improved correlation} are invariant under the transformations $a \mapsto -a$ and $b\mapsto -b$, we may assume that both $a$ and $b$ are positive.  
Set $q := \gcd(a,b) \leq \delta^{-1}$.  For each $u\in [q]$, define a $1$-bounded function $f_u:\Z\to\C$ by $f_u(x):=f(u +qx)$, and let $I_u := \set{x : u + qx \in [N]}$ denote the interval on which $f_u$ is supported. By the pigeon-hole principle, for some $u$ we have
\[
\sum_{x,h_1, h_2}\mu_{H}(h_1)\mu_{H}(h_2)\Delta_{\f{a}{q}h_1,\f{b}{q} h_2}f_u(x)\geq\delta |I_u|.
\]

Note that $\gcd(a/q,b/q)=1$, so by the previous lemma, there exist 1-bounded functions $g_u,h_u:\Z\to\C$ such that
\[
\biggabs{\sum_xf_u(x)g_u(x)h_u(x)}\geq \delta \floor{H}^2 - 2\brac{\tfrac{Hq}{a} + \tfrac{Hb}{q|I_u|}}\floor{H}^2 \gg \delta H^2.
\]
Furthermore, $g_u$ is $(a/q)$-periodic and
\begin{multline*}
\#\set{x\in I_u:h_u(x)\neq h_u(x+yb/q)\text{ for some }|y| \leq \eps |I_u|q/b}\\ \leq \brac{1+\tfrac{2q\eps |I_u|}{b}}\brac{1 + \tfrac{q|I_u|}{a}} \ll \tfrac{N}{a} + \tfrac{\eps N^2}{ab}.
\end{multline*}

Defining $g_{u'}$ and $h_{u'}$ to be identically zero when $u'\neq u$, we set $g(u'+qx):=g_{u'}(x)$ and $h(u'+qx):=h_{u'}(x)$.  One can then check that $g$ is $a$-invariant, that
$$
\biggabs{\sum_xf(x)g(x)h(x)} \gg \delta H^2,
$$
and that
$$
\#\set{x\in[N]:h(x)\neq h(x+by)\text{ for some }|y| \leq \eps N/b}\ll \tfrac{N}{a} + \tfrac{\eps N^2}{ab}.
$$

We may use the latter property to show that, provided $K \geq 1$, we have
$$
\biggabs{\sum_{x} f(x) g(x) h(x) - \sum_{x} h(x) \E_{y \in [K]} g(x+by) f(x + by) } \ll \tfrac{NK}{a} .
$$
Provided that  $K \leq c\delta^2 H^2 N^{-1/2}$ we deduce that
$$
\biggabs{\sum_xh(x) \E_{y \in [K]} g(x+by) f(x + by)} \gg \delta H^2.
$$
One can check that, as a function of $x$, the inner expectation is 1-bounded with support in $[-2N, 2N]$.  Applying the Cauchy--Schwarz inequality and changing variables then gives \eqref{improved correlation}.
\end{proof}
Finally we observe that a function of the form 
\begin{equation}\label{h def}
h(x) := \sum_k \mu_K(k) f(x+bk)
\end{equation} 
has nice $b$-periodicity properties.

\begin{lemma}\label{h lipschitz}
If $h$ is defined as in \eqref{h def} for some 1-bounded $f$, then $h$ is $O(K^{-1})$-Lipschitz along $b \cdot \Z$, in that for any $x, y\in \Z$ we have $h(x+by) = h(x) + O(|y|/K)$.
\end{lemma}

\begin{proof} 
Recalling the definition \eqref{fejer}, note that $\mu_K$ is $\floor{K}^{-2}$-Lipschitz, in that $|\mu_K(k+y) - \mu_K(k)| \leq |y|/\floor{K}^2$ for all $k, y \in \Z$.  Hence, for $|y| \leq K$, a change of variables gives
$$
|h(x+by) - h(x)|  \leq  \sum_k |\mu_K(k-y) - \mu_K(k)| \leq \frac{|y|}{\floor{K}^2}\sum_{|k| < 2\floor{K}} 1.
$$
\end{proof}

\section{Quantitative concatenation: control by a global Gowers norm}\label{concat sec}

The endpoint of this section is to show how our counting operator \eqref{counting op} is controlled by the $U^5$-norm.  We begin with four technical lemmas.  The first says that
convolving Fej\'er kernels along progressions of coprime common difference covers a substantial portion of an interval in a somewhat regular manner, a fact that can be interpreted Fourier analytically in the following.

\begin{lemma}\label{L1 Fourier bound}
Let $K, L \geq 1$ and let $a, b$ be integers satisfying $|a| \geq \delta L$, $|b| \geq \delta K$ and $\gcd(a, b) \leq \delta^{-1}$.  Then
$$
\int_\T \bigabs{\widehat{\mu}_K(a\beta)}\bigabs{\widehat{\mu}_L(b\beta)}\intd\beta \ll \frac{\delta^{-4}}{\floor{K}\floor{L}} .
$$
\end{lemma}

\begin{proof}
Taking complex conjugates inside the absolute values, we may assume that $a$ and $b$ are positive. Expanding Fourier transforms, one can check that
\begin{multline*}
\int_\T \bigabs{\widehat{\mu}_K(a\beta)}\bigabs{\widehat{\mu}_L(b\beta)}\intd\beta\\
 = \floor{K}^{-2} \floor{L}^{-2}\hash\biggset{(x, y) \in [K]^{2}\times[L]^{2} : a(x_1 - x_2) = b(y_1 - y_2)}.
\end{multline*}
Writing $d := \gcd(a,b)$, the number of solutions to the equation is at most
$$
\floor{K}\floor{ L} \brac{\tfrac{\floor{K}}{b/d} + 1}\brac{\tfrac{\floor{L}}{a/d} + 1}.
$$
\end{proof}

Our next lemma allows us to discard pairs of integers $a,b$ which are not sufficiently coprime. We exploit this repeatedly.

\begin{lemma}\label{gcd}
For fixed integers $|a_1|, |a_2| \leq M$.  The number of pairs $(b,c)$ of integers $ |b|,|c| \leq M$ such that $\gcd(a_1+b,a_2+c)> \delta^{-1}$ is $\ll \delta M^2$.
\end{lemma}
\begin{proof}
Notice that if $d = \gcd(a_1+b, a_2+c)$ then $d \leq 2M$.  Hence
\begin{align*}
\sum_{\substack{ |b|,|c| \leq M\\ \gcd(a_1+b,a_2+c) > \delta^{-1}}} 1  \leq \sum_{\delta^{-1} < d \leq 2M}\ \biggbrac{\ \sum_{ |m| \leq 2M,\ d \mid m} 1}^2
 &\leq  \sum_{\delta^{-1} < d \leq 2M} \brac{\frac{4M}{d} + 1}^2\\ & \ll M^2 \sum_{d> \delta^{-1}} \recip{d^2}
  \ll \delta M^2 .
\end{align*}
\end{proof}

The following lemma says that, as $a$ and $h$ range over $[N^{1/2}]$, the difference function $\Delta_{ah} f$ behaves like $\Delta_k f$ with $k \in [N]$, at least on average.

\begin{lemma}\label{densifying difference functions}
Let $f : \Z \to \C$ be a 1-bounded function with support in $[N]$.  Suppose that $\delta N^{1/2} \leq H \leq N^{1/2}$ and 
$$
 \E_{|a| \leq N^{1/2}}\sum_h \mu_H(h) \norm{\Delta_{ah} f }_{U^s}^{2^s} \geq \delta  \norm{1_{[N]}}_{U^s}^{2^s}.
$$ 
Then
$$
\norm{f}_{U^{s+1}}^{2^{s+1}} \gg \delta^{12}  \norm{1_{[N]}}_{U^{s+1}}^{2^{s+1}}
$$
\end{lemma}

\begin{proof}  Expanding the definition of the $U^s$-norm 
\begin{multline*}
\E_{|a| \leq N^{1/2}}\sum_h \mu_H(h) \norm{\Delta_{ah} f }_{U^s}^{2^s}\\ = \sum_{h_1, \dots, h_s, x} \overline{\Delta_{h_1, \dots, h_s}f(x)} \E_{|a| \leq N^{1/2}}\sum_h \mu_H(h) \Delta_{h_1, \dots, h_s} f(x+ ah).
\end{multline*}
Employing the Cauchy--Schwarz inequality to double the $a$ and $h$ variables gives
$$
\E_{|a|, |a'| \leq N^{1/2}}\sum_{h_i} \sum_x \sum_{h,h'} \mu_H(h)\mu_H(h')  \Delta_{h_1, \dots , h_s, ah - a'h'}f(x) \gg \delta^2  N^{s+1}. 
$$

By Lemma \ref{gcd} and the pigeon-hole principle, we deduce the existence of $|a|, |a'| \gg \delta^2N^{1/2}$ with $\gcd(a, a') \ll \delta^{-2}$  such that
$$
\sum_{h_i} \sum_x \sum_{h,h'} \mu_H(h)\mu_H(h')  \Delta_{h_1, \dots , h_s, ah - a'h'}f(x) \gg \delta^2 N^{s+1} .
$$
By Fourier inversion and extraction of a large Fourier coefficient, there exists $\alpha \in \T$ such that the right-hand side above is at most
$$
  \int_\T \abs{\widehat{\mu}_H(a\beta)}\abs{\widehat{\mu}_H(a'\beta)}\intd\beta\biggabs{\sum_{h_i} \sum_x  \Delta_{h_1, \dots , h_s, h_{s+1}}f(x)e(\alpha h_{s+1})}.
$$
The result follows on employing Lemma \ref{L1 Fourier bound} and Lemma \ref{phase invariance}.
\end{proof}

We now prove a similar lemma, but with $\Delta_{ah} f$  replaced by  $fg_a$ where $g_a$ is $a$-periodic.  The moral is that these are similar quantities (on average).
\begin{lemma}\label{difference vs periodic product}
Let $f, g_a : \Z \to \C$ be 1-bounded functions such that $g_a$ is $a$-periodic and $\supp(f) \subset[N]$.  Suppose that 
$$
\E_{|a| \leq N^{1/2}} \norm{f g_a}_{U^s}^{2^s} \geq \delta  \norm{1_{[N]}}_{U^s}^{2^s}.
$$ 
Then
$$
\norm{f}_{U^{s+1}}^{2^{s+1}} \gg \delta^{24}  \norm{1_{[N]}}_{U^{s+1}}^{2^{s+1}}
$$
\end{lemma}

\begin{proof}
Fix $|a| \leq N^{1/2}$. By the periodicity of $g_a$ and a change of variables, we have
$$
\sum_{h_i} \sum_x \Delta_{h_1, \dots, h_s}g_a(x)\Delta_{h_1,\dots, h_s}f(x) = \sum_{h_i} \sum_x \Delta_{h_1, \dots, h_s}g_a(x)\E_{y \in [N^{1/2}]}\Delta_{h_1, \dots, h_s}f(x + ay).
$$
Notice that the sum over $x$ is non-zero only if $|x|, |h_i| < N$, hence by Cauchy--Schwarz and a change of variables
\begin{align*}
\biggbrac{\E_{|a|\leq N^{1/2}}\norm{fg_a}_{U^s}^{2^s}}^2 & \ll N^{s+1}  \E_{|a| \leq N^{1/2}}\sum_{h_i} \sum_x \sum_y \mu_{N^{1/2}}(y) \Delta_{h_1, \dots, h_s, ay}f(x)\\
& =N^{s+1} \E_{|a| \leq N^{1/2}}\sum_y \mu_{N^{1/2}}(y) \norm{\Delta_{ay}f}_{U^{s}}^{2^s}
\end{align*}
The result follows on employing Lemma \ref{densifying difference functions}.
\end{proof}

We are now ready to give the technical heart of this section.  The (somewhat lengthy) assumptions come from our eventual application of Lemma \ref{arithcor}.
\begin{lemma}\label{hb lemma}
Fix $a \in \Z$ and let $\delta N^{1/2} \leq K \leq N^{1/2}$.  For each $b $ let $f, g_b, h_b : \Z \to \C$ be 1-bounded functions such that $\supp(f), \supp(h_b) \subset [N]$ and where $g_b$ is $b$-periodic.  Set
\begin{equation}\label{eq:hb-def}
\tilde{h}_b(x) := \sum_k \mu_K(k) h_{b}(x+(a+b)k)
\end{equation}
and suppose that
$$
\sum_{\substack{\delta \sqrt{N}\leq |b| \leq\sqrt{N}\\ \gcd(a,b) \leq \delta^{-1}}}  \sum_x f(x)g_b(x)\tilde{h}_b(x) \geq \delta N^{3/2}.
$$
Then 
$$
\E_{ |b| \leq N^{1/2}} \bignorm{h_b}_{U^3}^8 \gg \delta^{O(1)}  \norm{1_{[N]}}_{U^3}^8.
$$
\end{lemma}

\begin{proof}
We apply Cauchy--Schwarz to remove the weight $f(x)$ and double the $b$ variable, yielding
$$
\sum_{\substack{\delta \sqrt{N}\leq |b|,|b'| \leq\sqrt{N}\\ \gcd(a,b),\gcd(a,b') \leq \delta^{-1} }} \sum_x g_b(x) \tilde{h}_{b}(x)\overline{g_{b'}(x) \tilde{h}_{b'}(x)} \geq \delta^2  N^2.
$$
Employing Lemma \ref{gcd}, we may discard those $b,{b'}$ for which one of $\gcd(b', a+{b})$ or $\gcd(a+b', a+{b})$ is greater than $C\delta^{-2}$.  We may also discard those $b,b'$ for which either $|a+b|\leq c\delta^2 \sqrt{N}$ or $|a+b'|\leq c\delta^2 \sqrt{N}$. On combining this with the popularity principle, we deduce the existence of $\mathcal{B} \subset [- N^{1/2}, N^{1/2}]$ of size $|\mathcal{B}| \gg \delta^2 N^{1/2}$ such that for each $b \in \mathcal{B}$ there exists $|b'| \leq \sqrt{N} $ with all of $|b|, |b'|, |a+b|, |a+b'|$ $\gg \delta^2 \sqrt{N}$ and  all of $\gcd(b, a+{b})$, $\gcd({b'}, a+{b})$, $\gcd(a+b', a+{b})$ at most $O(\delta^{-2})$ and satisfying
\begin{equation}\label{PHbb'}
\sum_x g_b(x) \overline{\tilde{h}_{b'}(x)g_{b'}(x)} \tilde{h}_{b}(x) \gg \delta^2 N.
\end{equation}

Expanding the definition of $\tilde{h}_{b'}$, using the invariance of $g_b$ and changing variables gives
\begin{multline*}
 \sum_x \E_{k_1, k_3 \in [K]}\sum_{k_2}\mu_K(k_2) g_b(x+ (a+b')k_2 + {b'}k_3) \overline{h_{b'}(x+bk_1 + {b'}k_3)}\\ \overline{g_{b'}(x+bk_1 + (a+b')k_2)}\ \tilde{h}_{b}(x+bk_1 + (a+b')k_2 + {b'}k_3) \gg \delta^2  N .
\end{multline*}
Since $h_{b'}$ is supported on $[N]$ and $|b|, |b'|, K \leq  N^{1/2}$, there are at most $O(N)$ values of $x$  which contribute to the above sum.  Applying H\"older's inequality then gives
\begin{multline*}
 \sum_x \biggbrac{\E_{k_1, k_3 \in [K]}\sum_{k_2}\mu_K(k_2) g_b(x+ (a+b')k_2 + {b'}k_3) \overline{h_{b'}(x+bk_1 + {b'}k_3)}\\ \overline{g_{b'}(x+bk_1 + (a+b')k_2)}\ \tilde{h}_{b}(x+bk_1 + (a+b')k_2 + {b'}k_3) }^8\gg \delta^{16}  N .
\end{multline*}
The sum inside the 8th power corresponds to an  integral with respect to three probability measures on $\Z$, with integrand amenable to Lemma \ref{box cauchy}.  Combining this with a change of variables gives
$$
 \sum_x\sum_{k_1, k_2, k_3}\mu_K(k_1)\nu_K(k_2)\mu_K(k_3) \Delta_{bk_1, (a+b')k_2, {b'}k_3}\ \tilde{h}_{b}(x) \gg \delta^{16}  N ,
$$
where we set
$$
\nu_K(k) : = \sum_{k_1 - k_2 = k} \mu_K(k_1)\mu_K(k_2).
$$

By Lemma \ref{h lipschitz} and the definition \eqref{eq:hb-def}, each $\tilde{h}_b$ is $O(K^{-1})$-Lipschitz along $(a+b)\cdot \Z$.  Hence, if $l_i \in [L]$, a telescoping identity shows that
$$
|\Delta_{h_1+(a+{b})l_1, h_2+(a+{b})l_2, h_3+(a+{b})l_3} \tilde{h}_{b}(x)- \Delta_{h_1, h_2, h_3} \tilde{h}_{b}(x)| \ll L/K.
$$
Taking $L := c \delta^{16} K$ we obtain 
\begin{multline*}
 \sum_x\sum_{k_1, k_2, k_3}\mu_K(k_1) \nu_K(k_2) \mu_K(k_3)\E_{l_1, l_2, l_3\in [L]}  \\ \Delta_{bk_1 + (a+{b})l_1,\, (a+b')k_2+ (a+{b})l_2,\, {b'}k_3+ (a+{b})l_3}\ \tilde{h}_{b}(x) \gg \delta^{16}  N .
\end{multline*}
We may replace the uniform measure on the $l_i$ by Fej\'er kernels at the cost of three applications of Cauchy--Schwarz; this gives
\begin{multline*}
 \sum_x\sum_{\substack{k_1, k_2, k_3\\l_1, l_2, l_3}} \mu_K(k_1) \nu_K(k_2) \mu_K(k_3) \mu_L(l_1) \mu_L(l_2) \mu_L(l_3)\\ \Delta_{bk_1 + (a+{b})l_1,\, (a+b')k_2+ (a+{b})l_2,\, {b'}k_3+ (a+{b})l_3}\ \tilde{h}_{b}(x) \gg \delta^{O(1)}  N .
\end{multline*}

Write 
\begin{align*}
\lambda_1(h)   := \sum_{bk + (a+{b})l = h}& \mu_K(k) \mu_L(l),\qquad
\lambda_2(h)  := \sum_{(a+b')k + (a+{b})l = h} \nu_K(k) \mu_L(l),\\
& \lambda_3(h)  := \sum_{{b'}k + (a+{b})l = h} \mu_K(k) \mu_L(l).
\end{align*}
Then
$$
 \sum_x\sum_{h_1, h_2, h_3} \lambda_1(h_1)\lambda_2(h_2)\lambda_3(h_3) \\ \Delta_{h_1, h_2, h_3}\ \tilde{h}_{b}(x) \gg \delta^{O(1)}  N .
$$

By Fourier inversion and extraction of a large Fourier coefficient, there exist $\alpha_i \in \T$ such that 
$$
\biggabs{ \sum_x\sum_{h_1, h_2, h_3}  \Delta_{h_1, h_2, h_3}\ \tilde{h}_{b}(x)e(\underline{\alpha} \cdot \underline{h})}\prod_{i=1}^3 \int_\T \bigabs{\widehat{\lambda}_i(\beta)}\intd\beta \gg \delta^{O(1)}  N .
$$
By our choice of $b$, $b'$ (see the paragraph preceding \eqref{PHbb'}), together with Lemma \ref{L1 Fourier bound}, for each $i$ we have 
\begin{equation}\label{fejer fourier bound}
\int_\T \bigabs{\widehat{\lambda}_i(\alpha)}\intd\alpha \ll \frac{\delta^{-8}}{KL} \ll \frac{\delta^{-O(1)}}{ N},
\end{equation}
the latter following from the fact that $L \gg c\delta^{16} K$ and $K \geq \delta N^{1/2}$.
On combining this with Lemma \ref{phase invariance} we obtain
$$
\bignorm{\tilde{h}_{b}}_{U^3}^8 \gg \delta^{O(1)} N^4.
$$
Since $\tilde{h}_b$ is an average of translates of $h_b$, we may apply the triangle inequality for the $U^3$-norm, together with the fact that Gowers norms are translation invariant, and conclude that $\norm{h_b}_{U^3}^8 \gg \delta^{O(1)} N^4$. Summing over $b \in \mathcal{B}$ gives our final bound.
\end{proof}

Finally we synthesise Lemmas \ref{linearisation}, \ref{arithcor} and \ref{hb lemma}.

\begin{theorem}[Global $U^5$-control]\label{global U5}
Let $g_0, g_1, f : \Z \to \C$ be 1-bounded functions, each supported in $[N]$.  Suppose that 
$$
\abs{\sum_{x \in \Z} \sum_{y \in \N} g_0(x)g_1(x+y) f(x+qy^2)} \geq \delta \sum_{x \in \Z} \sum_{y \in \N} 1_{[N]}(x)1_{[N]}(x+y) 1_{[N]}(x+qy^2).
$$
Then either $N \ll q$ or
$$
\sum_{u \in [q]}\norm{f}_{U^5(u + q  \Z)}^{2^5} \gg \delta^{O(1)} \sum_{u \in [q]}\norm{1_{[N]}}_{U^5(u+ q  \Z)}^{2^5}.
$$
\end{theorem}

\begin{proof} We recall our conventions \eqref{M def} and \eqref{counting op} regarding $M$ and $\Lambda_q$, and note that $\Lambda_q(1_{[N]}) \gg NM$ unless $N \ll q$.
We begin by applying the linearisation procedure (Lemma \ref{linearisation}) to deduce that
$$
 \sum_{ a,b\in (-2M,2M)}\ \biggabs{\sum_{h}\mu_{H}(h)\sum_x\Delta_{q(a+b)h_1,qbh_2,qah_3}f(x)}\\\gg \delta^{32} NM^2.
$$
Applying Lemma \ref{gcd} we may discard those $a,b$ for which either $\gcd(a,b) > C\delta^{-32}$ or $|b| < c\delta^{32} M$.
Partitioning the sum over $x$ into congruence classes $u \bmod q$, the popularity principle gives:
\begin{itemize}
\item at least $\Omega(\delta^{32} q)$ residues $u \in [q]$;
\item for each of which there is a subset of $h_3 \in (-H, H)$ of $\mu_H$-measure\footnote{i.e.\ $\sum_{h_3 \in \mathcal{H}} \mu_H(h_3) \gg \delta^{32}$.} at least $\Omega(\delta^{32})$; 
\item for each of which there exist $\Omega(\delta^{32} M)$ values of $a \in (-2M,2M)$;
\item for each of which there are $\Omega(\delta^{32}M)$ values of $b \in (-2M,2M)$ satisfying $\gcd(a,b) \ll \delta^{-32}$ and $|b| \gg \delta^{32} M$;
\end{itemize}
and together these satisfy
\begin{equation*}
 \biggabs{\sum_{h_1, h_2}\mu_{H}(h_1, h_2)\sum_x\Delta_{(a+b)h_1,bh_2,ah_3}f(qx-u)}\\\gg \delta^{32} M^2.
\end{equation*}

For fixed $u, h_3, a$ write 
$
\tilde{f}(x) := \Delta_{ah_3} f(qx-u),
$ so that $\tilde{f}$ has support in the interval $[(2M)^2]$ and
\begin{equation*}
 \biggabs{\sum_{h_1, h_2}\mu_{H}(h_1, h_2)\sum_x\Delta_{(a+b)h_1,bh_2}\tilde{f}(x)}\\\gg \delta^{32} M^2.
\end{equation*}

Set
\begin{equation}\label{1st H bound}
H:= c \delta^{96} M \qquad \text{and}\qquad K:= c^3 \delta^{256} M,
\end{equation}
with $c$ sufficiently small to ensure that we may apply Lemma \ref{arithcor}. This gives the existence of a 1-bounded $b$-periodic function $g_b$ such that on setting
\begin{equation}\label{hb defn}
\tilde{h}_b(x) := \sum_k\mu_K(k)  \overline{\tilde{f}(x+(a+b)k)g_b(x+(a+b)k)} 
\end{equation}
we have
\[
\sum_x\tilde{f}(x)g_b(x)\tilde{h}_b(x)\gg \delta^{448} M^2.
\]

Setting $\eta := c \delta^{480}$ for some small absolute constant $c >0$, we may sum over our set of permissible $b$ to deduce that
$$
\sum_{\substack{\eta M \leq |b| \leq 2M\\ \gcd(a, b) \leq \eta^{-1}}} \sum_x\tilde{f}(x)g_b(x)h_b(x)\geq \eta M^3.
$$
The hypotheses of Lemma \ref{hb lemma} having been met, we conclude that
$$
\E_{ |b| \leq 2M} \bignorm{\tilde{f}g_b}_{U^3}^8 \gg \delta^{O(1)}  \norm{1_{[M^2]}}_{U^3}^8.
$$
Applying Lemma \ref{difference vs periodic product} then gives
$$
\bignorm{\tilde{f}}_{U^4}^{16} \gg \delta^{O(1)}  \norm{1_{[M^2]}}_{U^4}^{16}.
$$

Recalling that $\tilde{f}(x) = \Delta_{ah_3} f_u(x)$ where $f_u(x) := f(qx-u)$, we may integrate over the set of permissible $h_3 $ and $a$, utilising positivity to extend the range of summation, and deduce that
$$
\E_{|a|\leq 2M} \sum_h \mu_H(h_3) \bignorm{\Delta_{ah_3} f_u}_{U^4}^{16} \gg \delta^{O(1)}  \norm{1_{[M^2]}}_{U^4}^{16}
$$
Using Lemma \ref{densifying difference functions} and summing over the permissible range of $u$ we get that
$$
 \E_{u\in [q]}\norm{f_u}_{U^5}^{32} \gg \delta^{O(1)}  \norm{1_{[M^2]}}_{U^5}^{32},
$$
and the result follows.
\end{proof}

\section{Degree lowering}\label{sec6}
So far, we have shown that $\Lambda_q(f_0,f_1,f_2)$ is controlled by $\E_{u\in[q]}\|f_2\|_{U^5(u+q\Z)}^{2^5}$ whenever $f_0,f_1,$ and $f_2$ are $1$-bounded complex-valued functions supported on the interval $[N]$. The next step in our argument is to bound $\Lambda_q(f_0,f_1,f_2)$ in terms of the $U^5(u+q\Z)$-norm of the dual function
\begin{equation}\label{eq6.1}
F(x):=\E_{y\in[M]}f_0(x-qy^2)f_1(x+y-qy^2).
\end{equation}
We postpone this deduction until \S\ref{inverse theorem section}. In this section we show how $U^5$-control of the dual implies $U^2$-control.  

Our argument combines three simple lemmas: Weyl's inequality; what we call `dual--difference interchange', which allows us to replace the difference function of the dual by the dual of the difference functions; and the fact that a function whose difference functions correlate with `low rank' Fourier coefficients must have a large uniformity norm of lower degree.

The following log-free variant of Weyl's inequality can be found in \cite[Lemma A.11]{GreenTaoQuadratic}.

\begin{lemma}[Weyl's inequality]\label{weyl-ineq}
Let $\alpha,\beta \in \T$, $\delta \in (0,1)$ and let $I \subset \Z$ be an interval. Suppose that
\[ \big| \E_{y \in I} e(\alpha y^2 + \beta y ) \big| \geq \delta.\]
Then either $|I|\ll \delta^{-O(1)}$ or there exists a positive integer $q \ll \delta^{-O(1)}$ such that  
$$\|q\alpha\| \ll \delta^{-O(1)}|I|^{-2}.$$
\end{lemma}

This has the following consequence, which does not necessarily assume our convention \eqref{M def} regarding $M$.

\begin{lemma}\label{lem6.2}
Suppose that for $\alpha \in \T$ there are $1$-bounded functions $g_0,g_1:\Z\to\C$ supported on the interval $[N]$ such that
\[
\left|\sum_x\sum_{y\in[M]}g_0(qx)g_1(qx+y)e(\alpha y^2)\right|\geq\delta M N/q.
\]
Then either $M \ll q\delta^{-O(1)}$ or there exists a positive integer $q'\ll\delta^{-O(1)}$ such that $\|q'q^2\alpha\|\ll \delta^{-O(1)}q^2/M^2$.
\end{lemma}
\begin{proof}
We split the sum over $y\in[M]$ into arithmetic progressions modulo $q$ and split the sum over $x $ into intervals of length $M/q$.  Hence, by the pigeon-hole principle, there exists $u \in [q]$ and an integer $m$ such that on rounding the sum over $y$ we have 
$$
\left|\sum_{x, y\in[M/q]}g_0(q(m+x))g_1(u+q(m+x+y))e\brac{\alpha (u+qy)^2}\right| \\ \gg\delta(M/q)^2.
$$
Define the functions
\begin{align*}
h_0(x) := g_0(q(m+x)) &  1_{[M/q]}(x),  \qquad h_1(x):= g_1(u+q(m+x))1_{[2M/q]},\\  &h_2(x) := e\brac{\alpha (u+qx)^2}1_{[M/q]}(x)
\end{align*}
Then by orthogonality, extraction of a large Fourier coefficient and Parseval we have
\begin{align*}
\delta M^2/q^2 \ll \left|\int_\T \hat{h}_0(\beta) \hat{h}_1(-\beta) \hat{h}_2(\beta) \intd\alpha\right| \ll \bignorm{\hat{h}_2}_\infty \bignorm{\hat{h}_0}_{L^2} \bignorm{\hat{h}_1}_{L^2} \ll \bignorm{\hat{h}_2}_\infty M/q.
\end{align*}

It follows that there exists $\beta \in \T$ such that
$$
\abs{\sum_{x \in [M/q]} e\brac{\alpha (u+qx)^2+ \beta x}} \gg\delta M/q.
$$
Applying Weyl's inequality, we deduce the existence of $q' \ll \delta^{-O(1)}$ such that $\norm{q' q^2 \alpha} \ll \delta^{-O(1)} /(M/q)^2 =\delta^{-O(1)} q^2/M $.
\end{proof}

\begin{lemma}[Dual--difference interchange]\label{dual difference interchange}
For each $y \in [M]$, let $F_y : \Z \to \C$ be a 1-bounded function with support in an interval of length $N$.  Set
$$
F(x) := \E_{y \in [M]} F_y(x).
$$
Then for any function $\phi: \Z^s \to \T$ and  finite set $\mathcal{H}\subset \Z^s$ we have
\begin{multline*}
\brac{N^{-s-1}\sum_{\vh \in \mathcal{H}}\abs{ \sum_x  \Delta_{\vh} F(x) e\bigbrac{\phi(\vh)x }}}^{2^s} \ll_s \\N^{-2s-1} \sum_{\vh^{0}, \vh^{1}\in \mathcal{H}}\abs{\sum_x \E_{y \in [M]} \Delta_{\vh^0-\vh^1} F_y(x) e\bigbrac{\phi(\vh^0; \vh^1)x }},
\end{multline*}
where
$$
\phi(\vh^0; \vh^1) := \sum_{\omega \in \set{0, 1}^s} (-1)^{|\omega|} \phi(\vh^{\omega})\qquad \text{and} \qquad
\vh^{\omega} := (h_1^{\omega_1}, \dots, h_s^{\omega_s}).
$$
\end{lemma}

\begin{proof}
We proceed by induction on $s \geq 0$, the base case being an identity.
Suppose then that $s \geq 1$.  For $\vh \in \Z^{s-1}$ and $h \in \Z$, we note that
\begin{equation}\label{eq:delta-F}
\Delta_{(\vh, h)} F(x) = \Delta_{\vh}\brac{ \E_{y, y' \in [M]} F_y(x)\overline{F_{y'}(x+h)}}.
\end{equation}
Furthermore, if $(\vh,h) \in \mathcal{H}$ contributes a non-zero expression of the form \eqref{eq:delta-F} then $(\vh, h) \in (-N, N)^s$, since the support of $F$ is contained in an interval of length $N$. 
Hence by the induction hypothesis
\begin{multline*}
 \brac{N^{-s-1}\sum_h\sum_{\substack{\vh \\ (\vh,h) \in \mathcal{H}}}\abs{ \sum_x  \Delta_{(\vh,h)} F(x) e\bigbrac{\phi(\vh)x }}}^{2^s} \ll_s  \\
 \brac{N^{-2s}\sum_h  \sum_{\substack{\vh^{0}, \vh^{1}\\ (\vh^i,h) \in \mathcal{H}}}\abs{\sum_x \E_{y ,y'\in [M]} \Delta_{\vh^0-\vh^1} F_y(x)\overline{F_{y'}(x+h)} e\bigbrac{\phi(\vh^0;\vh^1;h)x }}}^2 ,
\end{multline*}
where
$$
\phi(\vh^0; \vh^1;h) := \sum_{\omega \in \set{0, 1}^{s-1}} (-1)^{|\omega|} \phi(\vh^{\omega},h).
$$
Letting $e(\psi(\vh^0;\vh^1;h))$ denote the conjugate phase of the inner absolute value, we take the sum over $h$ inside and apply Cauchy--Schwarz to obtain
\begin{align*}
&\brac{\sum_{\vh^{0}, \vh^{1},x}  \E_{y ,y'\in [M]} \sum_{\substack{h\\ (\vh^i,h) \in \mathcal{H}}}\Delta_{\vh^0-\vh^1} F_y(x)\overline{F_{y'}(x+h)} e\bigbrac{\phi(\vh^0;\vh^1;h)x +\psi(\vh^0;\vh^1;h)}}^2
\\ 
&\ll_s N^{2s-1} \sum_{\vh^{0}, \vh^{1}}  \sum_{\substack{h^0, h^1\\ (\vh^i,h^j) \in \mathcal{H}}}\\ 
& \qquad \abs{\sum_x \E_{y \in [M]} \Delta_{\vh^0-\vh^1} F_y(x)\overline{F_{y}(x+h^0-h^1)} e\Bigbrac{\bigbrac{\phi(\vh^0;\vh^1;h^0)-\phi(\vh^0;\vh^1;h^1)}x}}.
\end{align*}
The result follows.
\end{proof}

If $\phi(h_1, \dots, h_{s-1})$ is a function of $s-1$ variables we write
$
\phi(h_1, \dots, \hat{h}_i, \dots, h_{s}) := \phi(h_1, \dots, h_{i-1}, h_{i+1}, \dots, h_{s}).
$
We say that $\phi(h_1, \dots, h_s)$ is \emph{low rank} if there exist functions $\phi_i(h_1, \dots, h_{s-1})$ such that
$$
\phi(h_1, \dots, h_s) = \sum_{i=1}^s \phi_i(h_1, \dots, \hat{h}_i, \dots, h_s).
$$
From the definition of the Gowers norm together with the $U^2$-inverse theorem (Lemma \ref{U2 inverse}), one can show that largeness of the $U^{s+2}$-norm is equivalent to the existence of $\phi :\Z^s \to \T$ such that
$$
\sum_{h_1, \dots, h_s} \abs{\sum_x \Delta_h f(x)e(\phi(h)x)} \gg N^{s+1}.
$$
The following lemma says that if $\phi$ is low-rank, then the $U^{s+1}$-norm must also be large.
\begin{lemma}[Low rank correlation implies lower degree]\label{low rank lemma}
Let $f : \Z \to \C$ be a 1-bounded function with support in $[N]$.  Then for $\phi_1, \dots, \phi_m : \Z^{s-1} \to \T$ with $m \leq s$ we have
\begin{equation}\label{low rank ineq}
 \recip{N^{s+1}} \sum_{h_1, \dots, h_s}\abs{\sum_x \Delta_{h} f(x) e\brac{\sum_{i=1}^m\phi_i(h_1, \dots, \hat{h}_i, \dots, h_s)x}}\\ \ll_s  \brac{\frac{\norm{f}_{U^{s+1}}^{2^{s+1}}}{N^{s+2}}}^{2^{-m-1}}.
\end{equation}
 \end{lemma}

\begin{proof}
We proceed by induction on $m \geq 0$, the base case corresponding to the Cauchy--Schwarz inequality.  Suppose then that $m \geq 1$ and the result is true for smaller values of $m$.  Letting $e(\psi(h))$ denote the conjugate phase of the inner-most sum, the left-hand side of \eqref{low rank ineq} is equal to
\begin{multline*}
 \recip{N^{s+1}} \sum_{h_2, \dots, h_s, x}\Delta_{h_2, \dots, h_s}f(x)e\brac{\phi_1(h_2, \dots, h_s)}\sum_{h_1} \Delta_{h_2, \dots, h_s} \overline{f(x+h_1)} \\
 e\brac{\sum_{i=2}^m\phi_i(h_1, \dots, \hat{h}_i, \dots, h_s)x+ \psi(h_1, \dots, h_s)}.
\end{multline*}
By Cauchy--Schwarz, the square of this is at most
\begin{multline*}
 \ll_s \recip{N^{s+2}} \sum_{h_2, \dots, h_s}\ \sum_{h_1, h_1'\in (-N, N)}\\\abs{\sum_x\Delta_{h_1-h_1',h_2, \dots, h_s} f(x) 
 e\brac{\sum_{i=2}^m\brac{\phi_i(h_1, \dots, \hat{h}_i, \dots, h_s)-\phi_i(h_1', \dots, \hat{h}_i, \dots, h_s)}x}}.
\end{multline*}
Taking a maximum over $h_1' \in (-N, N)$ and changing variables in $h_1$, the latter is at most an absolute constant times
\begin{multline*}
 \recip{N^{s+1}} \sum_{h_1, h_2, \dots, h_s}\Bigg|\sum_x\Delta_{h_1,h_2, \dots, h_s} f(x) \\
 e\brac{\sum_{i=2}^m\brac{\phi_i(h_1+h_1',h_2 \dots, \hat{h}_i, \dots, h_s)-\phi_i(h_1',h_2 \dots, \hat{h}_i, \dots, h_s)}x}\Bigg|.
\end{multline*}
This phase  has lower rank than the original, hence we may apply the induction hypothesis to yield the lemma.
\end{proof}

\begin{lemma}[Degree lowering]\label{degree lowering lemma}
Let $f_0, f_1 : \Z \to \C$ be 1-bounded functions with support in $[N]$ and, writing $M := \sqrt{N/q}$, define the dual 
$$
F(x) := \E_{y \in [M]} f_0(x-qy^2) f_1(x+y-qy^2).
$$
If, for $s \geq 3$, we have
$$
\sum_{u \in [q]}\norm{F}_{U^s(u + q \cdot \Z)}^{2^s} \geq \delta \sum_{u \in [q]}\norm{1_{[N]}}_{U^s(u + q \cdot \Z)}^{2^s},
$$
then either $N \ll_s q^3\delta^{-O_s(1)}$ or
$$
\sum_{u \in [q]}\norm{F}_{U^{s-1}(u + q \cdot \Z)}^{2^{s-1}} \gg_s \delta^{O_s(1)} \sum_{u \in [q]}\norm{1_{[N]}}_{U^{s-1}(u + q \cdot \Z)}^{2^{s-1}},
$$
\end{lemma}

\begin{proof}  Write $M := \floor{(N/q)^{1/2}}$.  
Given $u \in [q]$ let 
$
F_u(x) := F(u+qx)$, a function with support in the interval $[2N/q]$.  Applying the popularity principle, there exists a set of $\Omega_s(\delta q)$ residues $u \in [q]$ for which $\norm{F_u}_{U^s}^{2^s} \gg_s \delta (N/q)^{s+1}$.  Expanding the definition of the $U^s$-norm \eqref{Us def} we have
$$
\sum_{h_1, \dots, h_{s-2}} \norm{\Delta_{h_1, \dots, h_{s-2}}F_u}_{U^2}^4 \gg_s \delta (N/q)^{s+1}.
$$
Applying the $U^2$-inverse theorem (Lemma \ref{U2 inverse}), there exists $\mathcal{H} \subset (-2N/q, 2N/q)^{s-2}$ of size $|\mathcal{H}| \gg_s \delta (N/q)^{s-2}$ and a function $\phi : \Z^{s-2} \to \T$ such that for every $\vh \in \mathcal{H}$ we have
\begin{equation}\label{phi correlation}
\abs{\sum_x \Delta_{\vh} F_u(x) e\bigbrac{\phi(\vh)x} }\gg_s \delta N/q.
\end{equation}

Set $T := \ceil{C_s\delta^{-1}N/q}$, with $C_s$ an absolute constant taken sufficiently large to ensure that, on rounding $\phi(\vh)$ to the nearest fraction of the form $t/T$, the validity of \eqref{phi correlation} remains.  Summing over $\vh \in \mathcal{H}$ and applying Lemma \ref{dual difference interchange}, we deduce that 
\begin{multline*}
 \sum_{\vh^{0}, \vh^{1}\in \mathcal{H}} \biggl|\sum_x\E_{y \in [M]} \Delta_{\vh^0-\vh^1} f_0(u+qx-qy^2)\Delta_{\vh^0-\vh^1}f_1(u+qx+y-qy^2)\\
  e\bigbrac{\phi(\vh^0; \vh^1)x}\biggr| \gg_s \delta^{O_s(1)} (N/q)^{2s-3}.
\end{multline*}
Applying the pigeon-hole and popularity principle, there exists $\mathcal{H}' \subset \mathcal{H}$ of size $\gg_s \delta^{O_s(1)} (N/q)^{s-2}$ and $\vh^{1} \in \mathcal{H}$ such that for every $\vh^0 \in \mathcal{H}'$ we have
\begin{multline*}
 \abs{\sum_x \sum_{y \in [M]} \Delta_{\vh^0-\vh^1} f_0(u+qx-qy^2)\Delta_{\vh^0-\vh^1}f_1(u+qx+y-qy^2)
  e\bigbrac{\phi(\vh^0, \vh^1)x }}\\ \gg_s \delta^{O_s(1)} MN/q.
\end{multline*}

By Lemma \ref{lem6.2}, for each $\vh^0\in \mathcal{H}'$ there exists $q' \ll \delta^{-O_s(1)}$ such that 
$$
\norm{q'q^2\phi(\vh^0, \vh^1)}\ll \delta^{-O_s(1)}q^3/N.
$$  
Notice that $\phi(\vh^0, \vh^1)$ is an element of the additive group $\set{t/T : t \in [T] }\subset \T$.  Moreover, for any $Q$ we have the inclusion 
$$
\set{\alpha \in \T : \exists q' \leq Q \text{ with } \norm{q'q^2\alpha}\leq Q q^3/N} \subset 
\bigcup_{1 \leq a \leq q' \leq Q}  \sqbrac{ \frac{a}{q'q^2} - \frac{qQ}{N}, \frac{a}{q'q^2} + \frac{qQ}{N}}.
$$
By a volume packing argument, the number of $t/T$ lying in this union of intervals is at most $Q^2(1+\tfrac{2qQT}{N})\ll_s \delta^{-O_s(1)}$.  It therefore follows from the pigeon-hole principle that there exists $\mathcal{H}''\subset \mathcal{H}'$ of size $\gg_s\delta^{O_s(1)} (N/q)^{s-2}$ and $t_0 \in [T]$ such that for any $\vh^0\in \mathcal{H}''$ we have $\phi(\vh^0, \vh^1) = t_0/T$.  In particular, when restricted to the set $\mathcal{H}''$, the function $\phi$ satisfies
$$
\phi(\vh^0) =  t_0/T - \sum_{\omega\in \set{0,1}^{s-2}\setminus\set{0}} (-1)^{|\omega|} \phi(\vh^{\omega}).
$$
The right-hand side of this identity is  \emph{low rank} according to the terminology preceding Lemma \ref{low rank lemma}.

Summing over $\vh\in \mathcal{H}''$ in \eqref{phi correlation}, we deduce the existence of a  low rank function $\psi: \Z^{s-2} \to \T$ such that
$$
\sum_{\vh}\abs{\sum_x \Delta_h F_u(x) e\bigbrac{\psi(\vh)x} }\gg_s \delta^{O_s(1)} (N/q)^{s-1}.
$$
Employing Lemma \ref{low rank lemma} then gives
$$
\norm{F_u}_{U^{s-1}}^{2^{s-1}} \gg_s \delta^{O_s(1)}(N/q)^s.
$$
Summing over permissible $u$, then extending to the full sum over $ u\in [q]$ by positivity, we obtain the bound claimed in the lemma.
\end{proof}

\section{The inverse theorem for our counting operator}\label{inverse theorem section}

In this section we show how $U^5$-control of the final function in our counting operator, as proved in Theorem \ref{global U5}, also yields $U^5$-control of the dual function. Combining this with the degree lowering of \S\ref{sec6}, we deduce that the dual is controlled by the $U^1$-norm. This allows us to deduce the following key inverse theorem for our counting operator.

\begin{theorem}[Inverse theorem for nonlinear Roth]\label{thm:inverse-one}
Let $f_0, f_1, f_2 : \Z \to \C$ be 1-bounded functions, each with support in $[N]$.  Suppose that
$$
\abs{\sum_{x \in \Z} \sum_{y \in \N} f_0(x)f_1(x+y) f_2(x+qy^2)} \geq \delta \sum_{x \in \Z} \sum_{y \in \N} 1_{[N]}(x)1_{[N]}(x+y) 1_{[N]}(x+qy^2).
$$
Then either $N \ll q^3 \delta^{-O(1)}$, or there exists $q'\ll \delta^{-O(1)}$ such that for each $i=0,1,2$ there exists a 1-bounded function $\phi_i : \Z \to \C$ which is $\ll \delta^{-O(1)} q^{3/2}N^{-1/2}$ Lipschitz along $q'q\cdot \Z$, in that
\begin{equation}\label{eq:inverse-lip}
|\phi_i(x+q'qy)-\phi_i(x)| \ll \delta^{-O(1)}q^{3/2}N^{-1/2} |y|\qquad (\forall x,y \in \Z),
\end{equation}
and for which
$$
\abs{\sum_x f_i(x)\phi_i(x)} \gg \delta^{O(1)} N.
$$
\end{theorem}

\begin{remark}
Inspection of the following proof reveals that for $i = 1$ one can in fact ensure that  the function $\phi_1$ is $\ll \delta^{-O(1)} q^{1/2}N^{-1/2}$ Lipschitz along $q'\cdot \Z$. This stronger property is not needed in our present application.
\end{remark}

\begin{proof}
Define the dual 
\begin{equation}\label{dual defn}
F_2(x) := \E_{y \in [M]} f_0(x-qy^2) f_1(x+y-qy^2).
\end{equation}
Either $N \ll q$ or we have
$$
\delta NM \ll |\Lambda_q(f_0, f_1, f_2)| = M\abs{ \sum_x F_2(x) f_2(x)}.
$$
Since $f_2$ is supported on $[N]$, the Cauchy-Schwarz inequality gives
$$
\Lambda_q(f_0, f_1, \overline{F_2}) = M\sum_x F_2(x)\overline{F_2(x)} \gg \delta^2NM.
$$
Since the functions $f_0, f_1, \overline{F_2}$ all have support contained in $[2N]$, we may apply Theorem \ref{global U5} to deduce that
$$
\sum_{u \in [q]}\norm{F_2}_{U^5(u + q \cdot \Z)}^{2^5} \gg \delta^{O(1)} \sum_{u \in [q]}\norm{1_{[2N]}}_{U^5(u + q \cdot \Z)}^{2^5}.
$$
We now apply Lemma \ref{degree lowering lemma} three times to obtain
$$
\sum_{u \in [q]}\norm{F_2}_{U^2(u + q \cdot \Z)}^{4} \gg \delta^{O(1)} \sum_{u \in [q]}\norm{1_{[2N]}}_{U^2(u + q \cdot \Z)}^{4}.
$$

By the popularity principle, there are at least $\gg \delta^{O(1)} q$ values of $u \in [q]$ for which $\norm{F_2}_{U^2(u + q \cdot \Z)}^{4} \gg \delta^{O(1)} \norm{1_{[2N]}}_{U^2(u + q \cdot \Z)}^{4}$.  The inverse theorem for the $U^2$-norm then gives the existence of $\phi(u) \in \T$ for which 
\begin{equation}\label{u correlation}
\abs{\sum_x F_2(u + qx) e(\phi(u) x)} \gg \delta^{O(1)} N/q.
\end{equation}
Set $T := \ceil{C\delta^{-C}N/q}$, with $C$ an absolute constant taken sufficiently large to ensure that, on rounding $\phi(u)$ to the nearest fraction of the form $t/T$, the inequality \eqref{u correlation} remains valid.  

By Lemma \ref{lem6.2}, for each $u$ satisfying \eqref{u correlation}, there exists a positive integer $q'\ll\delta^{-O(1)}$ such that $\|q'q^2\phi(u)\|\ll\delta^{-O(1)}q^3/N$.  By a volume packing argument similar to that given in the proof of Lemma \ref{degree lowering lemma}, the function $\phi$ is constant on a proportion of at least $\gg \delta^{O(1)}$ of the residues $u \in [q]$ satisfying \eqref{u correlation}.  Summing over these $u$, then extending the sum to all of $[q]$, we deduce the existence of $\alpha \in \T$  such that 
\begin{equation}\label{Gu discrep}
\sum_{u \in [q]} \abs{\sum_x F_2(u + qx) e(\alpha x)} \gg \delta^{O(1)} N.
\end{equation}

Expanding the dual function, there is a 1-bounded function $\psi(u\bmod q)$ such that the left-hand side of the above is equal to
\begin{multline}\label{hard correlation}
 \sum_{u \in [q]} \psi(u\bmod q)\sum_{x\equiv u(q)}  \E_{y \in [M]}f_0(x-qy^2)f_1(x+y-qy^2) e(\alpha x/q)\\
 = \sum_x f_0(x)\psi(x\bmod q)e(\alpha x/q) \E_{y \in [M]}f_1(x+y)e(\alpha y^2).
\end{multline}
Applying Lemma \ref{lem6.2} with $q = 1$, there exists $q' \ll \delta^{-O(1)}$ such that $\norm{q'\alpha} \ll  \delta^{-O(1)}/M^2 = \delta^{-O(1)} q/N$.

Using this, let us demonstrate the correlation of $f_0$ with a suitably Lipschitz function; the case of $f_1$ is similar (in fact simpler) and the case of $f_2$ is dealt with shortly.  Setting 
$$
\phi_0(x) := \psi(x\bmod q)e(\alpha x/q) \E_{y \in [M]}f_1(x+y)e(\alpha y^2),
$$
we have  $\sum_x f_0(x)\phi_0(x) \gg \delta^{O(1)} N$. Our aim is to show that $\phi_0$ is $\ll \delta^{-O(1)} q^{3/2}N^{-1/2}$ Lipschitz along $q'q\cdot \Z$.

For any $x, z \in \Z$ we have
\begin{multline*}
\abs{\phi_0(x) - \phi_0(x+q'q z)} \leq |1-e(\alpha q' z)|\\ + \abs{\E_{y\in [M]} f_1(x+y)e(\alpha y^2) - \E_{y\in [M]} f_1(x+y+q'qz)e(\alpha y^2)}.
\end{multline*}
The first term satisfies
$$
|1-e(\alpha q' z)| \ll \norm{q'\alpha} |z| \ll \delta^{-O(1)} q |z|/N\ll \delta^{-O(1)} q^{3/2}N^{-1/2} |z|.
$$
Changing variables in $y$, the second term is at most
\begin{align*}
 q'q|z|M^{-1}+&\abs{\E_{y\in [M]} g(x+y)e(\alpha y^2) - \E_{y\in [M]} g(x+y)e(\alpha (y-q'qz)^2)} \\
&\ll q'q|z|M^{-1}+ \E_{y \in [M]} \abs{e(\alpha y^2)-e(\alpha (y-q'qz)^2)}\\
& \ll q'q|z|M^{-1}+ \norm{\alpha(q'qz)^2}+\E_{y \in [M]}\norm{\alpha2yq'qz} 
\\
&\ll \delta^{-O(1)}q^{3/2}|z|N^{-1/2}+  \delta^{-O(1)} q^3 |z|^2N^{-1}.
\end{align*}
We thereby obtain the required Lipschitz inequality \eqref{eq:inverse-lip} in the case that $|z| \leq N^{1/2}q^{-3/2}$.  In the remaining case the Lipschitz inequality is trivial, since $\phi_0$ is 1-bounded.

Having proved the inverse theorem for $f_0$ and $f_1$, we now focus on $f_2$. As before, define the dual 
\begin{equation*}
F_1(x) := 1_{[N]}(x)\E_{y \in [M]} f_0(x-y) f_2(x+qy^2-y),
\end{equation*}
so that
$$
 \abs{ \sum_x F_1(x) f_1(x)} \gg \delta N.
$$
 Applying Cauchy-Schwarz  gives
$$
\Lambda_q(f_0, \overline{F}_1, f_2) = M\sum_x F_2(x)\overline{F_2(x)} \gg \delta^2NM,
$$
and the inverse theorem then yields $q_1\ll \delta^{-O(1)}$ such that there exists a 1-bounded function $\phi_1 : \Z \to \C$ which is $\ll \delta^{-O(1)} q^{3/2}N^{-1/2}$ Lipschitz along $q_1q\cdot \Z$ and for which
$$
\abs{\Lambda_q(f_0,\phi_1, f_2)} = \abs{\Lambda_q(f_0,\phi_11_{[N]}, f_2)}= M\abs{\sum_x F_1(x)\phi_1(x)} \gg \delta^{O(1)}NM.
$$

Repeating the above procedure, we obtain $q_0\ll \delta^{-O(1)}$ such that there exists a 1-bounded function $\phi_0 : \Z \to \C$ which is $\ll \delta^{-O(1)} q^{3/2}N^{-1/2}$ Lipschitz along $q_0q\cdot \Z$ and for which
$$
\abs{\Lambda_q(\phi_01_{[N]},\phi_1, f_2)}  \gg \delta^{O(1)}NM.
$$
We may replace $1_{[N]}$ in the above by a continuous function $\psi$ which is 1 on $[c\delta^C N, (1-c\delta^C)N]$, 0 on $\Z\setminus [N]$ and linear everywhere else. Since the linear parts have gradient $\ll \delta^{-O(1)}/N$, we deduce that $\psi$ is $\ll \delta^{-O(1)}/N$ Lipschitz along $\Z$. As a consequence, the product $\phi_0\psi$ is $\ll \delta^{-O(1)} q^{3/2}/N^{1/2}$  Lipschitz along $q_0 q \cdot \Z$. Finally, we observe that
$$
\delta^{O(1)}N \ll \max_{y\in [M]} \abs{\sum_x (\phi_0\psi)(x-qy^2) \phi_1(x+y-qy^2) f_2(x)},
$$
and the the function $x \mapsto (\phi_0\psi)(x-qy^2) \phi_1(x+y-qy^2)$ is $\ll \delta^{-O(1)} q^{3/2}/N^{1/2}$  Lipschitz along $q_0q_1 q \cdot \Z$.
\end{proof}

\begin{corollary}[Local correlation with constant functions]\label{cor:inverse}
Let $f_0, f_1, f_2 : \Z \to \C$ be 1-bounded functions, each with support in $[N]$.  Suppose that
$$
\abs{\sum_{x \in \Z} \sum_{y \in \N} f_0(x)f_1(x+y) f_2(x+qy^2)} \geq \delta \sum_{x \in \Z} \sum_{y \in \N} 1_{[N]}(x)1_{[N]}(x+y) 1_{[N]}(x+qy^2).
$$
Then either $N \ll q^3 \delta^{-O(1)}$, or there exists $q'\ll \delta^{-O(1)}$ and $N' \gg \delta^{O(1)} q^{-3/2} N^{1/2}$ such that for each $i=0,1,2$ we have 
$$
\sum_{x\in \Z}\left|\sum_{y\in[N']}f_i(x+q'qy)\right|\gg\delta^{O(1)}NN'.
$$
\end{corollary}

\begin{proof}
Let $\phi_i$ denote the function guaranteed by Theorem \ref{thm:inverse-one}. By the Lipschitz property of $\phi_i$ we have
$$
\abs{\phi_i(x) - \E_{y\in [N']} \phi_i(x+q'qy)} \ll \delta^{-O(1)} q^{3/2} N' N^{-1/2}.
$$
Hence taking $N' = c\delta^C N^{1/2} q^{-3/2}$ for $c>0$ sufficiently small and $C$ sufficiently large, we deduce that
$$
\abs{\sum_x f_i(x) \E_{y\in [N']} \phi_i(x+q'qy)} \gg \delta^{O(1)} N.
$$
Changing variables in $x$ and applying the triangle inequality yields the result.

\end{proof}

\section{The density increment lemma}\label{increment proof sec}
In this section we prove Lemma \ref{increment lemma}.
\begin{proof}[Proof of Lemma \ref{increment lemma}]
We either have $N \ll q$ or
$$
\sum_{x\in \Z}\sum_{y \in \N} 1_{[N]}(x)1_{[N]}(x+y)1_{A}(x+qy^2) \gg N^{3/2}q^{-1/2}.
$$
Therefore
\begin{multline*}
\abs{\sum_{x\in \Z}\sum_{y \in \N} 1_{A}(x)1_{A}(x+y)1_{A}(x+qy^2)-\delta^3 1_{[N]}(x)1_{[N]}(x+y)1_{[N]}(x+qy^2)}\\
 \gg \delta^3 N^{3/2}q^{-1/2}.
\end{multline*}

By a telescoping identity, there exist 1-bounded functions $f_0, f_1, f_2 : \Z \to \R$ all with support in $[N]$ and at least one of which is equal to $1_A - \delta 1_{[N]}$ such that
$$
\abs{\sum_{x\in \Z}\sum_{y \in \N} f_0(x)f_1(x+y)f_2(x+qy^2)}
 \gg \delta^3 N^{3/2}q^{-1/2}.
 $$
 Applying our inverse theorem (Corollary \ref{cor:inverse}) we deduce that there exists $q'\ll \delta^{-O(1)}$ and $N' \gg \delta^{O(1)} q^{-3/2} N^{1/2}$ such that 
$$
\sum_{x\in \Z}\left|\sum_{y\in[N']}\brac{1_A-\delta 1_{[N]}}(x+q'qy)\right|\gg\delta^{O(1)}NN'.
 $$
 Since the corresponding sum without absolute values is equal to zero, we are able to find $x$ such that
 $$
 \sum_{y\in[N']}\brac{1_A-\delta 1_{[N]}}(x+q'qy) \gg \delta^{O(1)} N'.
 $$
Writing $x_1 + q'q\cdot[N_1]$ for  $[N]\cap(x+q'q\cdot [N'])$ we have $N_1 \gg \delta^{O(1)} N' \gg \delta^{O(1)} q^{-3/2} N^{1/2}$ and
$$
|A \cap (x_1 + q'q\cdot [N_1])| \geq \brac{\delta + \Omega\brac{\delta^{O(1)}}}N_1.
$$
\end{proof}

\appendix

\section{Basic theory of the Gowers norms}

\begin{lemma}[Inverse theorem for the $U^2$-norm]\label{U2 inverse}
Let $f:\Z\to\C$ be a $1$-bounded function with support in $[N]$. 
Then there exists $\alpha \in \T$ such that
\[
\|f\|_{U^2}^4 \leq N\left|\sum_{x}f(x)e(\alpha x)\right|^2.
\]
\end{lemma}
\begin{proof}
Using the definition of the Fourier transform \eqref{Fourier transform}, together with orthogonality of additive characters, we have
$$
\norm{f}_{U^2}^4 = \int_{\T} \bigabs{\hat{f}(\alpha)}^4\intd\alpha \leq \bignorm{\hat{f}}_\infty^2 \int_{\T} \bigabs{\hat{f}(\alpha)}^2\intd\alpha \leq \bignorm{\hat{f}}_\infty^2 N.
$$
\end{proof}
For each $\omega\in\{0,1\}^s$, let $f_\omega:\Z\to\C$ be a function with finite support. Then we define the \emph{Gowers inner product} by
$$
[f_\omega]_{U^s} := \sum_{x, h_1, \dots, h_s} \prod_{\omega \in \set{0, 1}^s} \mathcal{C}^{|\omega|}f_\omega(x + \omega \cdot h).
$$
Here $\mathcal{C}$ denotes the operation of complex conjugation. Notice that $[f]_{U^s} = \norm{f}_{U^s}^{2^s}$.
\begin{lemma}[Gowers--Cauchy--Schwarz]
For each $\omega\in\{0,1\}^s$, let $f_\omega:\Z\to\C$ be a function with finite support. Then we have
\[
[f_\omega]_{U^s}\leq \prod_{\omega\in\{0,1\}^s}\|f_\omega\|_{U^s}.
\]
\end{lemma}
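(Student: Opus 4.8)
The plan is to derive the inequality from $s$ successive applications of the ordinary Cauchy--Schwarz inequality, one for each difference parameter $h_1,\dots,h_s$. Each application trades the family of $2^s$ functions $(f_\omega)_{\omega\in\{0,1\}^s}$ for two new families in which one further coordinate of the cube index $\omega$ has been ``homogenised'' (made irrelevant), and after $s$ steps every coordinate is homogenised, so the surviving $U^s$-inner products collapse to powers of single $U^s$-norms. This is a classical fact, so I only sketch the bookkeeping.

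For the first step, isolate the variable $h_s$. Write $\omega=(\omega',\omega_s)$ with $\omega'\in\{0,1\}^{s-1}$ and $\ul{h}'=(h_1,\dots,h_{s-1})$, so that $\omega\cdot(h_1,\dots,h_s)=\omega'\cdot\ul{h}'+\omega_s h_s$ and $|\omega|=|\omega'|+\omega_s$. Splitting the product over $\omega$ according to the value of $\omega_s$, moving the extra conjugation from the $\omega_s=1$ factors onto their product via $\mathcal{C}(ab)=\mathcal{C}(a)\mathcal{C}(b)$, and substituting a new running point for $x+h_s$ in those factors, one obtains
\[
[f_\omega]_{U^s}=\sum_{\ul{h}'\in\Z^{s-1}}G_0(\ul{h}')\overline{G_1(\ul{h}')},\qquad G_\ve(\ul{h}'):=\sum_x\prod_{\omega'\in\{0,1\}^{s-1}}\mathcal{C}^{|\omega'|}f_{(\omega',\ve)}\big(x+\omega'\cdot\ul{h}'\big).
\]
Cauchy--Schwarz over $\ul{h}'$ gives $|[f_\omega]_{U^s}|\le\|G_0\|_{\ell^2}\|G_1\|_{\ell^2}$. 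Expanding $\|G_\ve\|_{\ell^2}^2=\sum_{\ul{h}'}G_\ve(\ul{h}')\overline{G_\ve(\ul{h}')}$ and introducing a fresh difference variable in place of the second copy of the running point, one recognises $\|G_\ve\|_{\ell^2}^2$ as the $U^s$-inner product $[g^{(\ve)}_\omega]_{U^s}$ of the family with $g^{(\ve)}_{(\omega',0)}=g^{(\ve)}_{(\omega',1)}=f_{(\omega',\ve)}$ --- that is, the original family with both of its $\omega_s$-slices replaced by the $\ve$-slice. Hence $|[f_\omega]_{U^s}|\le\prod_{\ve\in\{0,1\}}[g^{(\ve)}_\omega]_{U^s}^{1/2}$.

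Now iterate: the identical manipulation applies with $h_j$ in place of $h_s$ for any $j\in\{1,\dots,s\}$, since the expression defining $[\,\cdot\,]_{U^s}$ is equivariant under permuting the $h_j$'s together with the coordinates of $\omega$; and homogenising one coordinate leaves the previously homogenised coordinates homogenised, since each doubling step defines the new functions to be copies of a fixed slice of the old ones and hence inherits all coincidences among those old functions. Applying Cauchy--Schwarz once in each of the directions $h_{s-1},\dots,h_1$ therefore yields, after $s$ steps,
\[
|[f_\omega]_{U^s}|\le\prod_{\omega_0\in\{0,1\}^s}\big[f_{\omega_0},\dots,f_{\omega_0}\big]_{U^s}^{2^{-s}}=\prod_{\omega_0\in\{0,1\}^s}\Big(\|f_{\omega_0}\|_{U^s}^{2^s}\Big)^{2^{-s}}=\prod_{\omega\in\{0,1\}^s}\|f_\omega\|_{U^s},
\]
where the middle equality is Definition~\ref{def2.1} applied to a constant family. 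The only real content beyond organisation is tracking the operators $\mathcal{C}^{|\omega|}$ through the substitutions and verifying the preservation-of-homogenisation claim, both of which are routine; alternatively one may simply cite any standard reference on Gowers uniformity norms.
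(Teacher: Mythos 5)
The paper states Lemma~\ref{lem2.4} without proof, treating it as a standard fact about Gowers norms. Your argument is exactly the canonical proof: $s$ successive applications of the Cauchy--Schwarz inequality, one per difference variable, each of which doubles the family and homogenises one coordinate of the cube index, so that after $s$ steps every inner product collapses to $\|f_{\omega_0}\|_{U^s}^{2^s}$. The bookkeeping you describe --- the factorisation $[f_\omega]_{U^s}=\sum_{\ul{h}'}G_0(\ul{h}')\overline{G_1(\ul{h}')}$ after the change of variables $y=x+h_s$, the identification $\|G_\ve\|_{\ell^2}^2=[g^{(\ve)}_\omega]_{U^s}$ via the substitution $h_s=y-x$, the non-negativity of the homogenised inner products, the symmetry under permuting the pairs $(h_j,\omega_j)$, and the preservation of previously homogenised coordinates --- is all correct.
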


\begin{proof}
See \cite[Exercise 1.3.19]{TaoHigher}.
\end{proof}

\begin{lemma}[Phase invariance for $s \geq 2$]\label{phase invariance}
Let $L \in \R[x, h_1, \dots, h_s]$ be a linear form, with $s\geq 2$ and let $f : \Z \to \C$.  Then 
$$
\biggabs{\sum_{x, h_1, \dots, h_s} \Delta_{h_1, \dots, h_s} f(x) e(L(x, h_1, \dots, h_s))} \leq \norm{f}_{U^s}^{2^s}.
$$
\end{lemma}

\begin{proof}
The linear form may be written as
$$
L(x, h_1, \dots, h_s) = \alpha x + \beta_1(x+h_1) + \dots + \beta_s(x+h_s),
$$
for some real $\alpha$ and $\beta_i$.  Write $f_0(x) := f(x)e(\alpha x)$, $f_{e_i}(x) := f(x) e(-\beta_i x)$ for $i = 1, \dots, s$, and for $\omega \in \set{0,1}^s\setminus \set{0, e_1, \dots, e_s}$ set $f_\omega := f$. Then by Gowers--Cauchy--Schwarz we have
$$
\biggabs{\sum_{x, h_1, \dots, h_s} \Delta_{h_1, \dots, h_s} f(x) e(L(x, h_1, \dots, h_s))} \leq \prod_{\omega} \norm{f_\omega}_{U^s}.
$$
It therefore suffice to prove that for a phase function $e_\alpha : x \mapsto e(\alpha x)$ we have
$
\norm{fe_\alpha}_{U^s} = \norm{f}_{U^s}.
$
The latter follows on observing that 
$$
\Delta_{h_1, \dots, h_s} (fe_\alpha) = \brac{\Delta_{h_1, \dots, h_s}f}\brac{\Delta_{h_1, \dots, h_s}e_\alpha},
$$
and for any $x, h_1, \dots, h_s$ with $s \geq 2$ we have
$
\Delta_{h_1, \dots, h_s}e_\alpha(x) = 1.
$
\end{proof}

\begin{lemma}[Box Cauchy--Schwarz]\label{box cauchy}
Let $\mu_1, \mu_2, \mu_3$ be probability measures on $\Z$ with the discrete sigma algebra.  If $F_1, F_2, F_3$ are 1-bounded function on $\Z^2$ and $F$ is a 1-bounded function on $\Z^3$ then
\begin{multline*}
\abs{\sum_{x \in \Z^3} F_1(x_2, x_3)F_2(x_1, x_3) F_3(x_1, x_2) F(x_1,x_2,x_3)\mu(x_1)\mu(x_2)\mu(x_3)}^8\\ \leq \sum_{x^0, x^1\in \Z^3} \prod_{\omega \in \set{0,1}^3} \mathcal{C}^{|\omega|} F(x_1^{\omega_1}, x_2^{\omega_2},x_3^{\omega_3})\mu_1(x_1^{0})\mu_1(x_1^{1})\mu_2(x_2^{0})\mu_2(x_2^{1})\mu_3(x_3^{0})\mu_3(x_3^{1}).\end{multline*}

\end{lemma}

{\scriptsize

}


\end{document}